\documentclass{article}
\usepackage{amssymb, amsmath, amsthm}
\pagestyle{myheadings}
\markright{}

\newtheorem{thm}{Theorem}

\newtheorem{lemma}[thm]{Lemma}

\newtheorem{claim}[thm]{Claim}
\newtheorem{defn}[thm]{Definition}
\newtheorem{cor}[thm]{Corollary}

\DeclareSymbolFont{AMSb}{U}{msb}{m}{n}
\DeclareMathSymbol{\R}{\mathbin}{AMSb}{"52}


\newcommand{\bbr}{\mathbb R}

\newcommand{\cala}{\mathcal A}
\newcommand{\calb}{\mathcal B}

\newcommand{\calr}{\mathcal R}

\newcommand{\cals}{\mathcal S}
\newcommand{\calc}{\mathcal C}
\newcommand{\cali}{\mathcal I}
\newcommand{\calj}{\mathcal J}

\newcommand{\one}{\mathbf{1}}

\newcommand{\shad}{\mathbf{sh}}


\newcommand{\beq}{\begin{equation}}
\newcommand{\eeq}{\end{equation}}
\newcommand{\beqa}{\begin{eqnarray*}}
\newcommand{\eeqa}{\end{eqnarray*}}
\newcommand{\beqan}{\begin{eqnarray}}
\newcommand{\eeqan}{\end{eqnarray}}

\author{Michael Bateman}
\title{ Maximal averages along a planar vector field depending on one variable 
\thanks{This work was supported by NSF grant DMS-0902490.  2010 Mathematics
Subject Classification:  Primary:  42B25, Secondary:   42B20 . }}
\vspace{2ex}
\author{ Michael Bateman
\thanks{Department of Mathematics, UCLA,
Box 951555, Los Angeles, CA 90095-1555 ({\tt bateman@math.ucla.edu.})}  }

\date{}
\begin{document}
\maketitle
\begin{abstract}
We prove (essentially) sharp $L^2$ estimates for a restricted maximal operator associated to a planar vector field that depends only on the horizontal variable.  The proof combines an understanding of such vector fields from earlier work of the author with a result of Nets Katz on directional maximal operators.  
\end{abstract}


\section{Introduction}


We prove an estimate on the $L^2$ norm of a certain maximal operator related to vector fields depending on only one variable.  The author has previously established bounds on the $L^p$ norm of this operator; interpolating these with the $L^2$ bounds in this paper yields (essentially) sharp $L^p$ estimates.  This theorem is loosely related to the problem of bounding Hilbert transforms along a vector field.  Defining the maximal operator requires a bit of notation, which we present below.

\subsection{Averages over rectangles}

We start by defining a maximal operator for any collection of rectangles $\calr $:
\beqa
M _{\calr} f(x) = \sup _{x\in R\in \calr} {1\over
{|R|} } \int _R f.
\eeqa
Let $v\colon \bbr ^2 \rightarrow [0,1]$.  For any rectangle $R$, let $L(R)$ denote
the length of $R$, $w(R)$ the width of $R$, and let $\theta(R)$ be the
interval of width ${w(R) \over {L(R)} }$ centered
at the slope of the long side of $R$.  Now let $V_R = \{p \in R \colon
v(p) \in \theta (R) \}$.  
Next we define the collection of rectangles concerning us.  Fix two parameters 
$0<w\leq 1$ and $0 < \delta \leq 1$, and define 
\beqa
\calr _{\delta} = \{ \text{rectangles $R$ of width  $w $} \colon |V(R)|
\geq \delta |R| \}.
\eeqa
Note that the definition of $\calr _{\delta}$ depends on the vector field $v$; we will suppress this dependence.  
\begin{thm} \label{delta}
Suppose $v\colon \bbr ^2 \rightarrow [0,1]$ depends on one variable, i.e., $v(a,b)=v(a)$.  Then for any
$f\in L^2 (\bbr ^2)$,
\beqa
||M_{ \calr _{\delta} } f || _2 \lesssim   \left( \log {1\over{\delta}}
\right)^{3\over 2} ||f||_2.
\eeqa
\end{thm}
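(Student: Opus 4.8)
The plan is to reduce the maximal operator $M_{\calr_\delta}$ to a directional maximal operator to which Katz's theorem applies, using the structure of one-variable vector fields to control how many ``essentially different'' directions the rectangles in $\calr_\delta$ can realize. First I would perform a standard reduction: fix the width $w$, fix a scale for the length $L$ (losing a $\log(1/\delta)$, or folding it into the final power, via a Littlewood--Paley / pigeonhole decomposition over dyadic length scales — though actually a cleaner route is to note that $M_{\calr_\delta}$ at a given width is comparable to a maximal average over rectangles of a \emph{fixed} eccentricity after a further dyadic decomposition, since widening $R$ only helps the average up to constants). The real content is: for a rectangle $R \in \calr_\delta$, the condition $|V_R| \ge \delta|R|$ forces $v$ to take values in the short interval $\theta(R)$ on a $\delta$-fraction of $R$; since $v = v(a)$ depends only on the first coordinate, this says that the set $\{a : v(a) \in \theta(R)\}$ has measure at least $\delta L(R)\cos(\text{angle})$ inside the projection of $R$ — a genuinely one-dimensional statement.

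Next I would invoke the earlier work of the author on one-variable vector fields (cited in the abstract) to say that such vector fields cannot ``spread out'' too much: the relevant structural fact is that the collection of slopes $\theta(R)$, $R\in\calr_\delta$ passing through a fixed point, after appropriate normalization, can be covered by $O(\log(1/\delta))$ — or a bounded number at each dyadic scale — lacunary-type families, or more precisely that the associated direction set has the Katz ``$N$ directions with bounded overlap'' property with $N$ controlled by the $\delta$-loss. Concretely, one extracts a set of directions $\Theta$ such that $M_{\calr_\delta} f \lesssim M_\Theta f$ pointwise (up to the dyadic decompositions above), where $M_\Theta$ is the Kakeya/directional maximal operator in the directions $\Theta$; Katz's theorem then gives $\|M_\Theta\|_{L^2\to L^2} \lesssim \log |\Theta| \lesssim \log(1/\delta)$. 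Combining this single power of $\log$ with the (at most) two further powers of $\log(1/\delta)$ lost in the scale/eccentricity decompositions yields the $(\log(1/\delta))^{3/2}$ bound — note $3/2$ rather than $3$, which suggests that the decomposition is done with an $L^2$-orthogonality (almost-orthogonality across scales) argument that square-roots one of the logarithmic losses, i.e. one packages the dyadic-length pieces with Cauchy--Schwarz against an $\ell^2$ sum that Littlewood--Paley controls, costing only $(\log)^{1/2}$ there.

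The main obstacle I expect is the passage from ``$|V_R|\ge\delta|R|$ for a one-variable $v$'' to ``the rectangles live in few directions.'' This is not literally true pointwise — through a single point $x$ there can be rectangles of many slopes in $\calr_\delta$ — so one cannot just freeze one direction. The right move is presumably to sort the rectangles by which dyadic sub-interval of $[0,1]$ their slope lies in, and within each such family use the one-variable structure to argue the family is \emph{sparse} in a quantitative sense (a Vitali / Córdoba-type covering bound, or a direct appeal to the author's prior density lemma for one-variable vector fields), so that Katz's result applies with an honest count of $O(\log(1/\delta))$ directions and bounded multiplicity. Making this sorting and sparsity argument interact correctly with the geometry of rectangles (as opposed to line segments) — controlling the tails/shadows, handling rectangles that cross dyadic boundaries, and keeping the $L^2$ almost-orthogonality clean — is where the technical weight of the proof will sit; the pieces (the author's structure theorem, Katz's directional maximal bound, Littlewood--Paley) are each black-boxable, but fitting them together without losing extra powers of $\log(1/\delta)$ is the delicate step.
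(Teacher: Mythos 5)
You have correctly identified the two black boxes (Katz's directional theorem and the one-variable structure) and even the main obstacle, but the reduction you propose to bridge them does not work, and the bridge is exactly where the theorem's content lies. Dominating $M_{\calr_\delta}$ pointwise by a directional maximal operator $M_\Theta$ with $\log|\Theta|\lesssim \log(1/\delta)$ is impossible uniformly in the width $w$: already for $v(x,y)=x$, rectangles of length $2^k w$ over a suitable base realize every slope of $S_k$ whenever $2^{2k}\lesssim (w\delta)^{-1}$, so the honest direction count is on the order of $(w\delta)^{-1/2}$ and Katz's bound applied to that set yields $\log\bigl(1/(w\delta)\bigr)$, which is not controlled by $\log(1/\delta)$ alone, while the theorem is uniform in $w$. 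Similarly, your pigeonhole/Littlewood--Paley decomposition over dyadic lengths costs a factor tied to the number of length scales, i.e.\ roughly $\log(1/w)$ (or nothing finite at all), not $\log(1/\delta)$. The only count that the hypothesis $|V_R|\geq\delta|R|$ plus $v(a,b)=v(a)$ really gives is local: for a fixed dyadic base interval $J$ at most $1/\delta$ slopes are $\delta$-popular, because the sets $G_{J,s}$ are essentially disjoint; summed over all base intervals this is still unbounded, so the ``sparsity within each slope family'' that you defer to a covering argument or to the earlier paper is precisely the missing idea -- no such statement of the form ``$O(\log(1/\delta))$ lacunary families'' is available or true in general. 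Note also that your exponent $3/2$ is reverse-engineered (``which suggests that the decomposition \dots square-roots one of the losses'') rather than produced by any mechanism you specify.

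What actually closes the gap in the paper is a stopping-time construction on the horizontal axis together with an extension of Katz's theorem from $N$ directions to $N$ ``good'' collections. After linearizing $M_{\calr_\delta}$, on each stopping interval $I$ one only counts slopes that are $\delta$-popular on a dyadic $J\subseteq I$ and were not already counted on an ancestor of $J$; the resulting measures $\mu_J$ satisfy a Carleson packing bound $\sum_{J\subseteq I}\mu_J\leq|I|$, which caps the rectangles chosen by ``good'' points of $I$ at $3/\delta$ good collections (rectangles over a common base interval sharing a slope, hence weak $(1,1)$), while the remaining points are pushed onto an exceptional union of subintervals of measure at most $\tfrac12|I|$. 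Katz's argument, adapted to $N=3/\delta$ good collections, gives the diagonal bound $(\log(1/\delta))^2$ for $T_jT_k^*$ between generations $j,k$ of the stopping time; the geometric decay of the exceptional shadows gives $\|T_jT_k^*\|\lesssim \delta^{-1}2^{-|j-k|}$; and since distinct generations are disjoint, $T_j^*T_k=0$ off the diagonal, so a Cotlar--Stein argument across generations yields exactly $(\log(1/\delta))^{3/2}$. So the square-rooted logarithm comes from almost orthogonality between stopping-time generations, not from orthogonality across rectangle length scales, and the step you labelled ``delicate but black-boxable'' in fact requires this new construction rather than an assembly of the cited results.
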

This estimate can be interpolated with the obvious $L^{\infty}$ bound on $M_{\calr_{\delta}}$ to obtain logarithmic bounds when $p\geq 2$.  Additionally, we have the following corollary:
\begin{cor}
Under the same hypotheses as the theorem, when $p\in (1, 2)$ we have for $f\in L^p (\bbr ^2)$,
\beqa
||M_{ \calr _{\delta} } f || _p \lesssim   \left( \log {1\over{\delta}}
\right)^{3 (1- {1\over p})  } {1\over \delta  ^{{2\over p} - 1}  }||f||_p.
\eeqa
\end{cor}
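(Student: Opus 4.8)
The plan is to deduce the Corollary from Theorem~\ref{delta} by real interpolation, pairing the $L^2$ bound of Theorem~\ref{delta} against a weak-type $(1,1)$ bound as the low endpoint. First I would establish
\[
\big\| M_{\calr_\delta} f \big\|_{L^{1,\infty}(\bbr^2)} \;\lesssim\; \frac{1}{\delta}\,\|f\|_{L^1(\bbr^2)},
\]
and then apply the Marcinkiewicz interpolation theorem to the sublinear operator $M_{\calr_\delta}$, between this weak $(1,1)$ estimate and the strong $(2,2)$ estimate $\|M_{\calr_\delta}f\|_2 \lesssim (\log\tfrac1\delta)^{3/2}\|f\|_2$. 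For $p\in(1,2)$, writing $\tfrac1p=(1-\te)+\tfrac\te2$ gives $\te = 2-\tfrac2p$ and $1-\te=\tfrac2p-1$, so interpolation yields
\[
\big\| M_{\calr_\delta} f \big\|_p \;\lesssim_p\; \Big(\tfrac1\delta\Big)^{\frac2p-1}\Big((\log\tfrac1\delta)^{3/2}\Big)^{2-\frac2p}\|f\|_p \;=\; \big(\log\tfrac1\delta\big)^{3(1-\frac1p)}\,\frac{1}{\delta^{\frac2p-1}}\,\|f\|_p,
\]
which is precisely the assertion. The implied constant degenerates as $p\to1^+$, in the usual Marcinkiewicz manner, but this is harmless since the Corollary is stated for each fixed $p$.

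Hence the real content is the weak-type $(1,1)$ bound with constant $\delta^{-1}$, and I want to emphasize that it is \emph{not} the obvious estimate. In particular it does not follow from any pointwise domination of the form $M_{\calr_\delta}f \lesssim \delta^{-1}\mathcal M f$ by the Hardy--Littlewood maximal operator: such a bound would need every $R\in\calr_\delta$ to lie in a fixed ball, whereas the eccentricity $L(R)/w$ is unbounded and the slices of $R$ drift a distance $\sim L(R)$ off the horizontal. It is exactly here that the hypothesis $v(a,b)=v(a)$ must be used. My plan for the endpoint bound is a covering argument. After the standard reductions---take $f=\one_S\ge0$, discretize $L(R)$ to $2^k w$ and the location and slope of $R$ dyadically, losing only absolute constants---a rectangle $R\in\calr_\delta$ at length scale $2^kw$ carries a slope interval $\te(R)$ of length $\approx 2^{-k}$ on which $v$ is dense over a $\delta$-fraction of the base $\pi_1(R)$. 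One then runs a C\'ordoba--Fefferman-type $L^1$ selection/overlap estimate for these rectangles, in which the structural facts about one-variable vector fields from the author's earlier work---the same input that drives Theorem~\ref{delta}---organize the admissible ``$\delta$-dense'' slopes across scales; the factor $\delta^{-1}$ enters because each selected rectangle is charged only the $\delta$-proportion of its fiber set on which $v$ genuinely points along $\te(R)$. Passing from the resulting bounded-overlap bound to the weak $(1,1)$ inequality is then routine.

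The main obstacle is precisely this endpoint. Interpolation is soft, and the bookkeeping above shows what is needed: a clean weak $(1,1)$ bound with constant exactly $\delta^{-1}$, with \emph{no} extra logarithm, since any such logarithm would survive into the Corollary near $p=1$ and spoil the stated exponent of $\log\tfrac1\delta$. So the work is to run---or to extract from the author's earlier results---a covering/overlap argument for $\calr_\delta$ that is lossless in $\delta$ apart from the single unavoidable $\delta^{-1}$; this is essentially the geometry behind Theorem~\ref{delta}, replayed at the level of $L^1$ rather than $L^2$.
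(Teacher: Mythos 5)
Your interpolation skeleton is the intended one: the paper does not prove the Corollary in-line, but the introduction says it is obtained by ``interpolating these [the $L^p$ bounds of \cite{B1}] with the $L^2$ bounds in this paper,'' and your Marcinkiewicz computation between a weak $(1,1)$ endpoint with constant $\delta^{-1}$ and the strong $(2,2)$ bound $(\log\tfrac1\delta)^{3/2}$ of Theorem~\ref{delta} reproduces the stated exponents exactly (with a $p$-dependent constant, which the $\lesssim$ in the Corollary allows).

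The problem is the endpoint. You have correctly identified that the weak $(1,1)$ bound $\|M_{\calr_\delta}\|_{L^1\to L^{1,\infty}}\lesssim \delta^{-1}$ is the substantive input and that it does not follow from any pointwise comparison with Hardy--Littlewood. But the remedy here is to \emph{cite} it, not to resketch it: this bound is precisely what \cite{B1} supplies, which is exactly what the introduction points you to. As written, your plan to ``run a C\'ordoba--Fefferman-type $L^1$ selection/overlap estimate\dots organize the admissible $\delta$-dense slopes across scales'' describes what such a proof would have to do, not a proof. In particular, the na\"ive observation that each base interval carries at most $O(\delta^{-1})$ popular slopes does not by itself exhibit $\calr_\delta$ as a union of $O(\delta^{-1})$ \emph{good} collections in the sense of Definition~\ref{good}: a collection that assigns one popular slope to each dyadic interval need not be weak $(1,1)$, since nested or overlapping base intervals may carry unrelated slopes. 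This is exactly the obstruction that forces the stopping-time machinery of Lemma~\ref{iteration} in the $L^2$ argument, and it reappears at $L^1$. Obtaining the clean $\delta^{-1}$ with no stray logarithm is therefore a genuine theorem (the content of \cite{B1}), and your sketch does not discharge it. Replace the sketch by a citation to \cite{B1} (or supply a full proof of the endpoint), and the Corollary follows from the interpolation you wrote.
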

We remark that the theorem here is for rectangles of a fixed width $w$.  It is not clear that the argument here generalizes to the situation of rectangles with arbitrary width.  However, a theorem in that setting may play a role in the study of Hilbert transforms along a one-variable vector field.  Motivation for studying maximal averages comes from differentiation theory; this connection has been known for some time.  More recently, maximal theorems of this flavor (with the density parameter $\delta$) have  been connected to the study of Hilbert transforms along a vector field.  See \cite{LL1}, \cite{LL2} for more on this connection.  

We prove the theorem by combining ideas developed by Nets Katz in the
study of directional maximal operators
(\cite{K1}, \cite{K2}) together with the understanding of one-variable
vector fields obtained by the author in \cite{B1}.  It is likely that the exponent on the logarithm is not sharp, but we do not pursue that idea here.  (For example, the argument to obtain estimate \eqref{offdiagonal} below is rather crude.)  We note however that the operator norm is at least 
$\sqrt{ \log {1\over {\delta } } }$.  This can be seen by considering the slope field $v(x,y)=x$ defined on $[0,1]^2$.  Then for 
$w=\delta$, $\calr _{\delta}$ contains (at least) all rectangles of length $1$ projecting vertically to $[0,1]$ with slope in $[0,1]$.  Now we can construct Kakeya-type sets using rectangles from $\calr _{\delta}$.  Letting $f$ be the characteristic function of such a set shows that 
$||M _{\calr_{\delta } } || _2  \gtrsim \sqrt {   \log  {1\over {\delta } }  } $.  Similarly, the $L^p$ estimates given in the corollary are sharp up to logarithmic factors; this can be seen by again considering $v(x,y)=x$ and letting $f$ be the 
characteristic function of a $\delta \times \delta $ square.  In this setup $M_{\calr_{\delta}} \gtrsim \delta$ on a set of measure approximately one.


\section{Outline of proof} \label{outline}


Recall that all rectangles in question have a fixed width $w$.  
By a standard reduction (see \cite{B1}), we may assume that the slope of each rectangle of length $2^k w$ is in the discrete set
\beqa
S_k = \{ {  {j+ {1\over 2} } \over {2^k} } \colon j \in \{ 0,1,..., 2^k -1\}  \}.
\eeqa

Further, we may assume our ``rectangles" are actually parallelograms projecting to dyadic intervals. 
It will be convenient later to assume all rectangles in $\calr _{\delta}$ live in a bounded region, which we take to be the unit square.  We may do so by (say) approximating with finite subcollections of $\calr _{\delta}$.  
Next we linearize the maximal operator.  That is, for each $x\in \bbr
^2$ we choose a rectangle $R \in   \calr _{ \delta}$ that nearly
achieves the supremum in the definition of the maximal operator.  We
will call this rectangle $\rho(x)$.  It is possible that there is no $R \in \calr _{\delta} $ containing $x$;  let $X$ denote the set of points with this property.  This gives us a map $\rho \colon
\bbr ^2 \setminus X \rightarrow \calr _{\delta}$ and a linear operator defined by 
\beqa
T_{\rho} f(x) = { 1\over {|\rho (x)|} } \int _{\rho (x)} f 
\eeqa
for $x\not\in X$, and $T_{\rho} f(x) =0$ for $x\in X$.  Without loss of generality, we will take this region to be the unit square.
To prove Theorem \ref{delta}, it suffices to prove the same bounds on
the linear operators $T_{\rho}$ independent of the choice function
$\rho$.  To do this, we decompose the operator in a certain way
depending on the vector field and the linearization $\rho$, and apply
the Cotlar-Stein lemma.  From now on, we consider the function $\rho$
to be fixed, and we write $T=T_{\rho}$.  We recall a variant of the Cotlar-Stein
lemma.
\begin{lemma}\label{Cotlar-Stein} [Cotlar-Stein]
Suppose $\{T_j\} _{-\infty} ^{\infty} $ is a sequence of linear
operators acting on a Hilbert space $\mathbf H$, and let $T=
\sum_{j=-\infty} ^{\infty} T_j$.
Assume that $a\colon \mathbb Z \rightarrow \mathbb R$ is such that for all $j, k$,
\begin{equation} \label{pieces}
||T_j T_k ^* ||  \leq a(j-k)
\end{equation}
\beqa
||T_j  ^* T_k || =0.
\eeqa
Then
\beqa
||T|| \leq a(0)^{1\over 2} \left( \sum \sqrt{a(j)} \right) ^{1\over 2}.
\eeqa
\end{lemma}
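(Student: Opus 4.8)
The plan is to use the hypothesis $\|T_j^*T_k\| = 0$ for $j\neq k$ head-on, which lets us sidestep the usual Cotlar--Stein iteration entirely. First I would observe that $T_j^*T_k = 0$ for $j \neq k$ says exactly that the ranges of the $T_j$ are pairwise orthogonal, since $\langle T_j f, T_k g\rangle = \langle T_k^*T_j f, g\rangle = 0$. After a routine truncation (the hypothesis $\sum_j\sqrt{a(j)}<\infty$ forces $a(j)\to 0$, and we may reduce to finitely many nonzero $T_j$, so that convergence of $\sum_j T_j$ is not an issue), Pythagoras gives, for every $f\in\mathbf H$,
\beqa
\|Tf\|^2 = \Big\|\sum_j T_j f\Big\|^2 = \sum_j \|T_j f\|^2 .
\eeqa
So the whole problem reduces to the single-sum estimate $\sum_j\|T_j f\|^2\lesssim\|f\|^2$, and this is where the bounds on $T_jT_k^*$ enter.

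To bound $\sum_j\|T_j f\|^2$ I would dualize. Writing $b_j=\|T_j f\|$ and choosing unit vectors $h_j$ with $T_j f = b_j h_j$, we have
\beqa
\sum_j b_j^2 = \sum_j b_j\langle T_j f, h_j\rangle = \Big\langle f,\ \sum_j b_j T_j^* h_j\Big\rangle \le \|f\|\,\Big\|\sum_j b_j T_j^* h_j\Big\| ,
\eeqa
and, expanding the last norm and using $|\langle T_k T_j^* h_j, h_k\rangle| \le \|T_k T_j^*\| \le a(j-k)$,
\beqa
\Big\|\sum_j b_j T_j^* h_j\Big\|^2 = \sum_{j,k} b_j b_k \langle T_k T_j^* h_j, h_k\rangle \le \sum_{j,k} b_j b_k\, a(j-k) \le \Big(\sum_l a(l)\Big)\sum_j b_j^2 ,
\eeqa
the last step being Schur's test for the kernel $a(j-k)$ (equivalently $b_j b_k\le\frac12(b_j^2+b_k^2)$). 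Canceling $\big(\sum_j b_j^2\big)^{1/2}$ yields $\sum_j\|T_j f\|^2 \le \big(\sum_l a(l)\big)\|f\|^2$, hence $\|T\|\le\big(\sum_l a(l)\big)^{1/2}$. Since $\|T_j T_k^*\|\le\|T_j\|\|T_k\|\le a(0)$ one may assume $a(l)\le a(0)$ for all $l$, and then $\sum_l a(l)\le a(0)\sum_l\sqrt{a(l)}$, which is the estimate asserted.

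Because this is a standard lemma there is no real obstacle; the one thing worth emphasizing is that the argument rests entirely on the orthogonality hypothesis collapsing the double sum $\sum_{j,k}\langle T_j f, T_k f\rangle$ onto its diagonal, after which the quantitative input $\|T_j T_k^*\|\le a(j-k)$ is precisely what is needed to sum that diagonal by Cauchy--Schwarz. (Alternatively one could run the textbook proof, bounding $\|T\|^{2m}=\|(TT^*)^m\|$, expanding, noting that the orthogonality kills every string except $T_{p_0}(T_{p_1}^*T_{p_1})\cdots(T_{p_{m-1}}^*T_{p_{m-1}})T_{p_m}^*$, estimating each survivor by the geometric mean of its two natural pairings, and letting $m\to\infty$; the direct argument above is just shorter.) The only genuine care needed is the reduction to finitely many $T_j$, which is harmless since $\sum_j\sqrt{a(j)}$ converges.
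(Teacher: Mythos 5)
Your proof is correct, and it takes a genuinely different route from the paper, which does not write out an argument at all but appeals to a ``straightforward modification'' of the textbook Cotlar--Stein proof in \cite{D}: that is the iteration which bounds $\|T\|^{2m}$ by $\|(TT^*)^m\|$, expands, uses $T_j^*T_k=0$ to kill every string except $T_{p_0}(T_{p_1}^*T_{p_1})\cdots(T_{p_{m-1}}^*T_{p_{m-1}})T_{p_m}^*$, and lets $m\to\infty$ --- exactly the alternative you sketch in parentheses. Your direct argument instead uses the orthogonality hypothesis at full strength (Pythagoras for the ranges, then duality and Schur's test for the kernel $a(j-k)$), and it buys something: no iteration, no limit in $m$, and the cleaner intermediate bound $\|T\|\le\bigl(\sum_l a(l)\bigr)^{1/2}$. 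The reduction to finitely many nonzero $T_j$ is indeed harmless, since the uniform bound on finite partial sums together with orthogonality of the ranges gives convergence of $\sum_j T_jf$.

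One small point at the very end. After truncating to $\tilde a(l)=\min(a(l),a(0))$ (legitimate, since $\|T_jT_k^*\|\le\|T_j\|\,\|T_k\|\le a(0)$), the correct chain is $\tilde a(l)\le a(0)^{1/2}\sqrt{a(l)}$, hence $\|T\|\le a(0)^{1/4}\bigl(\sum_l\sqrt{a(l)}\bigr)^{1/2}$; your written inequality $\sum_l a(l)\le a(0)\sum_l\sqrt{a(l)}$, and with it the stated constant $a(0)^{1/2}$, additionally requires $a(0)\ge1$. This is a defect of the statement rather than of your proof: the bound $a(0)^{1/2}\bigl(\sum_j\sqrt{a(j)}\bigr)^{1/2}$ is not homogeneous under $T_j\mapsto cT_j$, $a\mapsto c^2a$, and it fails for small $a(0)$ (take a single $T_0=\epsilon P$ with $P$ an orthogonal projection and $\epsilon<1$, so $a(0)=\epsilon^2$ and the claimed bound would give $\|T\|\le\epsilon^{3/2}<\epsilon=\|T\|$). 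Your $a(0)^{1/4}$ version is the scale-correct one, and it implies the stated bound in the only regime where the paper uses it, namely $a(0)=C\bigl(\log\frac{1}{\delta}\bigr)^2\gtrsim1$.
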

A straightforward modification of the proof in \cite{D} gives the result claimed here.  The following lemma shows how we will satisfy the hypotheses of the Cotlar-Stein lemma.
\begin{lemma}\label{decomp}
Suppose $v$ is a vector field depending on one variable.  
There exist pairwise disjoint sets $A_1, A_2, A_3, \dots ,$ such that if we define
\beqa
T_j f(x) = \one _{A_j} (x) Tf(x)
\eeqa
for $j=1,2,3,\dots ,$ then for all $j, k$, we have
\begin{equation}\label{diagonal}
||T_j T_k ^*|| \lesssim  \log \left( {1\over {\delta}} \right) ^2
\end{equation}
\begin{equation}\label{offdiagonal}
||T_j T_k ^*|| \lesssim  {1\over {\delta}} 2 ^{-|j-k|} ,
\end{equation}
and 
\beqa
||T^* _j T_k || =0.
\eeqa
\end{lemma}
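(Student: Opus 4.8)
The plan is to decompose $T$ according to the length scale of the linearizing rectangle. For $j=1,2,\dots$ set
\[
A_j=\{x\in\bbr^2\setminus X : L(\rho(x))=2^{j-1}w\}.
\]
These are pairwise disjoint with union $\bbr^2\setminus X$, so $T=\sum_j T_j$, each $T_j$ being built only from rectangles of a single eccentricity. The last assertion of the lemma is then immediate: for $j\neq k$, $T_j^* T_k = T^*\,(\one_{A_j}\one_{A_k})\,T = 0$ since $A_j\cap A_k=\emptyset$.

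The structural fact that drives the proof — and the point where the hypothesis $v(a,b)=v(a)$, together with the analysis of one-variable vector fields from \cite{B1}, enters — is the following. Since the vertical slices of a parallelogram $R$ are constant in the first coordinate, $|V_R|\ge\delta|R|$ is equivalent to the one-dimensional inequality $|\{a\in I_R : v(a)\in\theta(R)\}|\ge\delta|I_R|$, where $I_R$ is the dyadic interval onto which $R$ projects; and after the reduction to slopes in $S_k$, the arc $\theta(R)$ is exactly a dyadic interval of length $2^{-k}$ when $L(R)=2^kw$. Since these arcs partition $[0,1]$ at each scale, for a fixed $I$ at most $\lfloor 1/\delta\rfloor$ of them can be $\delta$-heavy; equivalently, through any point only $O(1/\delta)$ distinct directions occur among the rectangles of $\calr_\delta$ of a fixed length. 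Combined with the finer control from \cite{B1} on how the associated parallelograms can overlap, this is precisely the input needed to run Nets Katz's argument.

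To obtain \eqref{diagonal} I would show $\|T_j\|_{2\to2}\lesssim\log(1/\delta)$ for every $j$, whence $\|T_jT_k^*\|\le\|T_j\|\,\|T_k\|\lesssim(\log(1/\delta))^2$. But $\|T_j\|$ is dominated by the $L^2$ operator norm of the maximal operator over the $\calr_\delta$-rectangles of a single eccentricity, through each point of which there are only $O(1/\delta)$ directions; running Katz's directional-maximal-operator argument (\cite{K1},\cite{K2}) in this single-scale setting — whose combinatorial core is a bound on how much a family of tubes through a common region, with directions distributed over a dyadic tree of $O(\log(1/\delta))$ levels, can overlap — yields $\|T_j\|\lesssim\log(1/\delta)$. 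I expect this to be the main obstacle: one must verify that Katz's overlap/iteration scheme survives the linearization by $\rho$ and uses only the per-scale, per-point direction count furnished by \cite{B1}, not a global count. (A more careful single-scale estimate should save a square root here, which is presumably responsible for the exponent $3/2$, rather than $2$, in Theorem~\ref{delta}.)

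Finally, \eqref{offdiagonal} would be obtained by a crude direct estimate. For $j\neq k$, say $j>k$, the kernel of $T_jT_k^*$ is $\one_{A_j}(x)\,\one_{A_k}(z)\,|\rho(x)\cap\rho(z)|/(|\rho(x)|\,|\rho(z)|)$, and I would apply Schur's test. A scale-$j$ parallelogram has width $w$ and length $2^{j-1}w$, hence can meet the much shorter scale-$k$ parallelograms only within a bounded dilate of itself; bounding the number of the latter by simply allowing a full $1/\delta$ directions at scale $k$ (rather than exploiting the geometry further) produces row and column sums for the kernel of size $\lesssim(1/\delta)2^{-|j-k|}$, which is \eqref{offdiagonal}. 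With \eqref{diagonal}, \eqref{offdiagonal}, and $T_j^*T_k=0$ for $j\ne k$ in hand, Lemma~\ref{Cotlar-Stein} applies with $a(m)=\min\{(\log(1/\delta))^2,(1/\delta)2^{-|m|}\}$, and since $\sum_m\sqrt{a(m)}$ is bounded by a fixed power of $\log(1/\delta)$, Theorem~\ref{delta} follows.
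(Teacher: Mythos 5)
Your last two assertions are fine, and the diagonal bound \eqref{diagonal} via a per--scale count of $O(1/\delta)$ directions and Katz's argument is essentially sound (the paper obtains the same $(\log(1/\delta))^2$ by splitting into $3/\delta$ good collections and invoking Theorem~\ref{rare}). However, the decomposition you propose, $A_j = \{x : L(\rho(x)) = 2^{j-1}w\}$, is \emph{not} the one the paper uses, and it does not yield the off-diagonal estimate \eqref{offdiagonal}. The paper's $A_j$ are defined by an iterative stopping-time procedure (Lemma~\ref{iteration}): a nested family of intervals $\cali_j$ is built with $|I\cap\shad(\cali_k)|\leq 2^{-|j-k|}|I|$ for $I\in\cali_j$, $k\geq j$, and $A_j$ consists of the points that ``get resolved at depth $j$.'' Each $A_j$ can contain rectangles of \emph{all} lengths; it is the exponential shrinkage of the supporting intervals, not the length scale, that produces $2^{-|j-k|}$.

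With your scale-based $A_j$ no such decay in $|j-k|$ is available. Concretely, for $j>k$ the kernel of $T_jT_k^*$ is $K(x,z)=\one_{A_j}(x)\one_{A_k}(z)|\rho(x)\cap\rho(z)|/(|\rho(x)||\rho(z)|)$, and Schur's test gives row and column sums that are uniformly $O(1)$ (even $O(1/\delta)$ if you insist on counting directions crudely), but with no gain in $|j-k|$: for fixed $x\in A_j$,
\[
\int K(x,z)\,dz \;\le\; \frac{1}{|\rho(x)|}\bigl|\{z\in A_k : \rho(z)\cap\rho(x)\neq\emptyset\}\bigr| \;\lesssim\; 1,
\]
since $z\in\rho(z)$ forces $z$ to lie in a bounded dilate of $\rho(x)$, and the measure of that dilate is comparable to $|\rho(x)|$, not to $2^{-|j-k|}|\rho(x)|$. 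The geometric fact you cite --- that a long parallelogram meets short ones only inside a bounded dilate of itself --- localizes $z$ \emph{spatially} but does not reduce the total mass of such $z$, so it produces no exponential saving. And without some control relating the supports of $A_j$ and $A_k$ (which your definition provides no mechanism for), nothing prevents $A_j$ and $A_k$ from occupying comparable, adjacent regions, in which case $\|T_jT_k^*\|$ is genuinely $\gtrsim 1$ regardless of how large $|j-k|$ is. In short, the missing idea is precisely Lemma~\ref{iteration} and the nested interval families $\cali_j$: they are what couples the index $j$ to a geometric smallness statement, and that coupling is the whole content of \eqref{offdiagonal}.
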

We will use estimate \eqref{diagonal} when $|j-k| \lesssim \log {1\over
{\delta} }$ and estimate \eqref{offdiagonal} otherwise.
We remark that $||T_j ^* T_k || = 0$ automatically for $j\neq k$, since in this case $ A_j \cap A_k = \emptyset $.  Also, $||T_j ^* T_j || = ||T_j T_j ^* ||$.  With this in mind, Theorem \ref{delta} follows easily from the previous two lemmas by letting 
\beqa
a(n) = C (\log {1\over {\delta}} )^2 \text{   for   } n \leq C \log {1\over {\delta}} ,
\eeqa
\beqa
a( C \log {1\over {\delta}} + n ) = 2^{-n} \text{   for   } n\geq 0,
\eeqa
and
\beqa
a(n) = a(-n)   \text{   for   } n < 0.
\eeqa
Applying the Cotlar-Stein lemma, we see that 
\beqa
||T|| \leq  a(0)^{1\over 2} \left( \sum \sqrt{a(n)} \right) ^{1\over 2}
 \lesssim (\log {1\over {\delta}} )^{3\over 2} .
\eeqa

\subsection{Agenda}
The remainder of the paper is devoted to proving Lemma \ref{decomp}.
In Section \ref{lemmas}, we present the large components of the proof of 
Lemma \ref{decomp} and show how they imply \eqref{diagonal}.  In Section \ref{offdiag}, we show how Lemma \ref{iteration} in Section \ref{lemmas} implies \eqref{offdiagonal}.  In Sections \ref{iterationproof} and \ref{rareproof}, we prove the lemmas from Section \ref{lemmas}.


\subsection{Notation} \label{notation}

If $A$ is a set, we write $\one _A$ to denote the characteristic function of $A$.  We write $C$ to denote universal constants that may vary from one appearance to the next.  We write $x\lesssim y$ to mean $x \leq C y $.  If $\calc$ is a collection of sets, we write 

\beqa
\shad ( \calc ) = \bigcup _{C\in \calc} C.  
\eeqa
$\shad$ stands for ``shadow".
If $A\subseteq \bbr ^2$, we write $\pi _1 (A)$ to denote the projection of $A$ onto the horizontal axis, and 
 $\pi _2 (A)$ to denote the projection of $A$ onto the vertical axis.  

\section{ The main ingredients  } \label{lemmas}
In this section we present the statements of the two most substantial ingredients needed for the proof of 
Lemma \ref{decomp}.  The first concerns a prototype of the operator $M_{\calr _{\delta} }$ defined above, which is closely related to the maximal operator over $\sim {1\over {\delta }}$ arbitrary directions.  A more precise definition is given below.
The second key lemma is the inductive step in a stopping time argument.  It tells us how to define the sets $A_j$ needed for the decomposition of our operator $T$ by identifying the intervals on which rectangles of many different directions might be chosen. 

\subsection{ Statements }
We start with a definition.
\begin{defn} \label{good}
We say a collection of rectangles $\calr$ is \rm good \it if whenever $R_1, R_2 \in \calr $ are such that 
$\pi _1(R_1) = \pi _1(R_2) $, the slope of $R_1$ equals the slope of $R_2$; and if $M_{\calr }$ is weak (1,1).  
\end{defn}
\begin{thm}\label{rare}
Let $v$ be a vector field, and let $N\geq 2$ be an integer.  Suppose $\calr_1 , \calr _2, ..., \calr_N$ are good collections of 
rectangles.  Let $\calr = \cup _{j=1} ^N \calr _j$.  Then
\beqa
||M_{\calr} f ||_{2} \lesssim  \log N ||f|| _2 .
\eeqa
\end{thm}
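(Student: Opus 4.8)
The plan is to exploit the structure of a \emph{single} good collection: because rectangles in $\calr_j$ with the same horizontal projection have the same slope, the collection $\calr_j$ behaves essentially like a one-dimensional object fibered over the horizontal axis, and in particular $M_{\calr_j}$ is bounded on $L^2$ (indeed weak $(1,1)$ by hypothesis). The difficulty is only in how the $N$ collections interact, so the target $\log N$ gain must come from a Cotlar--Stein / almost-orthogonality argument or from an iteration that peels off directions geometrically. I would follow Katz's approach to the $N$-direction maximal operator (\cite{K1}, \cite{K2}): organize the slopes appearing in $\calr$ into a binary tree of scales, and at each level split $\calr$ into sub-collections according to which dyadic block of directions a rectangle's slope falls in, so that after $\log N$ levels each piece sees only one direction.

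Concretely, first I would linearize: replace $M_\calr$ by $T_\rho f(x) = \frac{1}{|\rho(x)|}\int_{\rho(x)} f$ for a choice function $\rho(x)\in\calr$ nearly attaining the sup, and reduce to bounding $\|T_\rho\|_{2\to 2}\lesssim \log N$ uniformly in $\rho$. Next, for each dyadic scale $s$ with $1\le s\le \log N$ I would group the $N$ directions into $2^s$ arcs of $\sim N 2^{-s}$ consecutive slopes each; writing $\calr^{(s)}_i$ for the rectangles in $\calr$ whose slope lies in the $i$-th arc at scale $s$, the key geometric fact (coming from the good/one-variable structure and Katz's covering argument) is that at a fixed scale the operators $\{T$ restricted to $\calr^{(s)}_i\}_i$ act on essentially disjoint regions or satisfy a strong single-scale $L^2$ bound with constant $O(1)$, while the telescoping over the $\log N$ scales contributes the logarithm. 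One then runs Cotlar--Stein (Lemma \ref{Cotlar-Stein}) on the pieces indexed by scale, using that cross terms between very different scales decay.

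The step I expect to be the main obstacle is establishing the single-scale estimate, i.e.\ that combining the $2^s$ good sub-collections at a fixed level of the tree costs only an absolute constant in $L^2$ (not a power of $s$). This is exactly where the hypothesis that each $\calr_j$ is good — same horizontal projection forces same slope, plus weak $(1,1)$ — must be used: it lets one control the overlap of the selected rectangles by an $L^1$/Kakeya-type argument (bounding the measure of the set where many directions are chosen) and then interpolate against the trivial $L^\infty$ bound. A secondary technical point is verifying the almost-orthogonality input $\|T_j^*T_k\|=0$ or small: here one arranges the $A_j$-type decomposition so that pieces at the same scale have disjoint supports, making those cross terms vanish, exactly as in the outline's use of disjoint sets $A_j$. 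Once the single-scale bound and the disjointness are in hand, summing the geometric series in Cotlar--Stein yields $\|M_\calr\|_{2\to2}\lesssim \log N$.
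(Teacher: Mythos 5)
Your proposal aims for a genuinely different route from the paper, but it has a structural misunderstanding and a central gap.

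The paper does not use Cotlar--Stein to prove Theorem~\ref{rare}; it proves a restricted strong-type estimate $\|T^*\one_E\|_2^2\lesssim \log N\,|E|$ by a stopping-time argument on the \emph{badness} functional $B_R^E=\frac{1}{|R|}\int_R T^*(\one_{E_{\pi_1(R)}})$. The engine is Lemma~\ref{key}: for each good sub-collection, one constructs $E'$ with $|E'|\le\frac12|E|$ such that rectangles with $B_R^E$ large lie inside $E'$ and satisfy $B_R^E\le C+B_R^{E'}$. Iterating yields the exponential decay $|\bigcup_{R:B_R\sim k}R|\lesssim 2^{-ck}|E|$, and then $\int(T^*\one_E)^2\lesssim\sum_R\nu_R^E B_R$ is summed by splitting at $k\sim\log N$. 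This is the ``badness'' argument of Katz's Duke paper, not the binary-tree/almost-orthogonality argument.

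The more serious issue is that you frame the problem as a maximal operator over $N$ \emph{directions} and propose to group these into $2^s$ arcs of consecutive slopes. But the hypothesis is $N$ \emph{good collections}, not $N$ directions: a single good $\calr_j$ already contains rectangles of many different slopes (the constraint is only that rectangles sharing a horizontal projection share a slope). There is no global ordering of ``the $N$ directions'' to arrange into arcs, so the dyadic tree of slope-blocks you describe is not well-defined in this setting. The paper explicitly flags this: Theorem~\ref{rare} does not follow from Katz's statement about $N$ directions, only from adapting his proof, and the adaptation is exactly what replaces ``same direction'' by ``good collection (hence weak $(1,1)$)'' in the iterative Lemma~\ref{key}. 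Finally, even granting a directions-style reduction, your plan hinges on a ``single-scale $L^2$ bound with constant $O(1)$'' and on ``cross terms between very different scales decay,'' neither of which you establish; you correctly identify the first as the main obstacle, but that is precisely the content a proof must supply, and here it would have to be supplied in a form compatible with good collections rather than directions.
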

This theorem is essentially due to Katz, in $\cite{K1}$.  The only difference between this theorem and his is that we allow ourselves to average over $N$ ``good" collections rather than $N$ different directions.  The theorem stated here does not actually follow from Katz's statement, but rather his proof.  We include the proof at the end.  The key point is that if $\calr _j$ is a good collection of rectangles with fixed width, then $M_{\calr _j}$ is weak-type (1,1).  The other part of goodness is more of a convenience.
\begin{lemma}\label{iteration}
Let $I\subseteq [0,1]$ be a dyadic interval.  
Let $E\subseteq I \times [0,1]$ be such that if $x\in E$, then 
$\pi_1 ({\rho (x)}) \subseteq I$.  There exist collections 
$\calr _1, \calr _2, ... \calr _{ {3\over {\delta } } } $, a collection of disjoint intervals $\cali _I$, and sets $E_{good}$ and $E_{bad}$ such that 
\begin{enumerate}
\item $\shad (\cali _I) \subseteq I$,
\item $|\shad (\cali _I)| \leq {1\over 2} |I|$,
\item each $\calr _i$ is a good collection, 
\item  $E_{good} \cap E_{bad} = \emptyset $,
\item $E_{good} \cup E_{bad} = E $,
\item for $x\in E_{good}$, we have $\rho (x) \in \calr _i$ for some $i\in \{1,2,..., {3\over {\delta } } \}$,
\item and for $x\in E_{bad}$, we have $\pi_1 ({\rho (x)})  \subseteq  \shad (\cali _I) $.
\end{enumerate}
\end{lemma}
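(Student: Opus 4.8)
The plan is to run a stopping-time argument on the dyadic subintervals of $I$, declaring an interval ``busy'' when many essentially different directions are used by $\rho$ on points sitting above it. Concretely, for each dyadic subinterval $J \subseteq I$ consider the set of points $x \in E$ with $\pi_1(\rho(x)) = J$, and look at the collection of slopes $\{\theta(\rho(x))\}$ that occur. If this slope set, after grouping into the discrete families $S_k$, has at most $\sim 1/\delta$ distinct values, we will be able to absorb all these rectangles into $3/\delta$ good collections; if it has more than that, we mark $J$ as one of the intervals in $\cali_I$ and postpone those points to $E_{bad}$. I would process the dyadic intervals top-down (largest first), stopping as soon as an interval is marked, so that the intervals in $\cali_I$ are automatically pairwise disjoint, giving (1).

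The heart of the matter is establishing (2), the measure bound $|\shad(\cali_I)| \le \frac{1}{2}|I|$. This is where the hypothesis that $v$ depends on one variable enters, via the results of \cite{B1}: because $v(a,b) = v(a)$, the set $V_R$ for a rectangle $R$ over a column $J$ is a union of full vertical fibers over the subset of $J$ where $v(a) \in \theta(R)$, and the density condition $|V_R| \ge \delta|R|$ becomes a statement purely about the one-dimensional set $\{a \in J : v(a) \in \theta(R)\}$. The point is that if $J$ is busy — i.e. $\gtrsim 1/\delta$ different slope-intervals $\theta$ each capture a $\delta$-fraction of $J$ — then, the $\theta$'s being essentially disjoint intervals of comparable length, the preimages $v^{-1}(\theta) \cap J$ are $\gtrsim 1/\delta$ subsets of $J$ each of relative size $\gtrsim \delta$, and the geometry from \cite{B1} (roughly, a Vitali/overlap estimate controlling how many such preimages can coexist) forces $J$ itself to be captured inside a region of controlled density. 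Summing the ``busyness'' inequality over the disjoint marked intervals and choosing the stopping threshold to be a large enough multiple of $1/\delta$ then yields $\sum_{J \in \cali_I} |J| \le \frac{1}{2}|I|$. I expect this to be the main obstacle: extracting, from the one-variable structure theory of \cite{B1}, exactly the quantitative trade-off between ``number of directions used over a column'' and ``total length of columns where this happens.''

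Granting (2), the remaining items are bookkeeping. Define $E_{bad}$ to be the set of $x \in E$ whose column $\pi_1(\rho(x))$ lies in (the shadow of) a marked interval; this is exactly (7), and (4)–(5) hold by setting $E_{good} = E \setminus E_{bad}$. For (3) and (6): over every unmarked column at most $\sim 1/\delta$ grouped slopes occur among the rectangles $\rho(x)$ with $x \in E_{good}$, so I would color the columns — using that two rectangles over the same column have the same slope is built into the definition of \emph{good} — assigning to color $i \in \{1,\dots,3/\delta\}$ all rectangles that are the $i$-th slope over their column; each color class $\calr_i$ then satisfies the first clause of Definition \ref{good} by construction, and it satisfies the weak $(1,1)$ clause because it consists of rectangles of the fixed width $w$ with one slope per column, which is the basic weak-type fact noted after Theorem \ref{rare} (cf. \cite{K1}). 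The constant $3/\delta$ rather than $1/\delta$ is the harmless slack one picks up from the grouping into the families $S_k$ at different scales $2^k w$. This gives all seven conclusions.
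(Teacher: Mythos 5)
Your stopping rule is vacuous as stated, and this propagates through the whole argument. You propose to mark a dyadic subinterval $J$ when the rectangles $\rho(x)$ with $\pi_1(\rho(x)) = J$ use ``more than $\sim 1/\delta$'' distinct slopes. But the density condition in the definition of $\calr_\delta$ already forces this to be impossible: for a rectangle projecting to $J$, the admissible slope windows of scale $w/|J|$ are pairwise disjoint intervals, and each one that supports a rectangle in $\calr_\delta$ captures a $\ge\delta$ fraction of $J$, so at most $1/\delta$ slopes can occur over a single column. Hence no interval is ever marked, $\cali_I = \emptyset$, $E_{bad} = \emptyset$ — and the lemma would then assert that \emph{all} of $E$ can always be absorbed into $3/\delta$ good collections in a single step, which cannot be true (it would make the subsequent off-diagonal machinery of the paper unnecessary). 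What you are missing is that the direction count that matters accumulates \emph{across scales}, over a nested chain $J \subsetneq J_1 \subsetneq \dots \subsetneq I$, not within a single column. The paper's proof introduces, for each $J$, the set $T(J)$ of \emph{new} admissible slopes not already claimed by any ancestor interval, defines $\mu_J = \sum_{s\in T(J)} |G_{J,s}|$, proves the Carleson packing bound $\sum_{J\subseteq I} \mu_J \le |I|$ (this is where disjointness of the sets $G_{J,s}$, hence the one-variable hypothesis, enters), and then defines $\cali_I$ by the stopping condition $\sum_{I' \subseteq K \subseteq I} \mu_K/|K| \ge 2$. Conclusion (2) then follows from Chebyshev against this Carleson measure. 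None of that multi-scale bookkeeping appears in your outline.

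The coloring step also has a real gap, independent of the stopping rule. You propose to put into $\calr_i$ the rectangles whose slope is the $i$-th admissible slope over their own dyadic column, and you assert that each such $\calr_i$ is weak $(1,1)$ ``because it has one slope per column.'' That is not a known weak-type fact: nested columns of different lengths are assigned slopes with no compatibility, so a single point can lie in rectangles of $\calr_i$ from every scale, each with an essentially arbitrary direction, and no Vitali-type covering argument applies. Note also that the remark you cite after Theorem~\ref{rare} merely records that \emph{good} collections are weak $(1,1)$ by definition; it is not a theorem that one-slope-per-column implies weak $(1,1)$. What the paper actually uses (in the proof of Claim~\ref{goodn}) is the much more structured fact: if there is a disjoint family of intervals $\calj$ and for each $J\in\calj$ a \emph{single} slope window $s_J$ such that every $R\in\calr_i$ has $\pi_1(R)\subseteq J$ and $slope(R)\supseteq s_J$ for some $J\in\calj$, then $M_{\calr_i}$ is weak $(1,1)$ — each block behaves like a fixed-direction maximal function, and the blocks have disjoint shadows. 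To produce $\calr_i$'s of this form, the paper sorts the pairs $(J,s)$ of $\Theta_{good}$ into generations $\Omega_0, \Omega_1, \dots$ under a partial order, proves that within a generation the $J$'s are pairwise disjoint (giving goodness), and proves that there are at most $3/\delta$ generations (giving the count), again using the Carleson bound. Your coloring scheme does not provide the disjoint-block-with-one-direction structure, and so the conclusion (3) is not justified.
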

\subsection{Defining the sets $A_j$ used in the decomposition of $T$}

We now use Lemma \ref{iteration} to construct the sets $A_j$ mentioned in Lemma \ref{decomp}.
The last point in the lemma above guarantees that if $x\in E_{bad}$, then 
$x\in \shad (\cali _I) \times [0,1]$.
 Let 
$\cali_0 = \{ [0,1] \}$ and let $E_0 = [0,1]^2$.  Now suppose we have constructed the collections 
$\cali _0, \cali _1, \dots, \cali _j$ and the sets $E_0,E_1, \dots E_j$.  
For each $I\in \cali _j$, define $E_{j, I} = E_j \cap (I \times [0,1])$.
Apply the previous lemma to the intervals $I\subseteq \cali _j$ with the sets $E_{j,I}$.  Define
\beqa
\cali _{j+1} = \bigcup _{I\in \cali _j} \cali _I,
\eeqa 
and
\beqa
E_{j+1} = \bigcup _{I\in \cali _j} \left( E_{j,I} \right) _{bad},
\eeqa
and
\beqa
A_j = \bigcup _{I\in \cali _j} \left( E_{j,I}\right) _{good}.
\eeqa
These are the sets $A_j$ used in the decomposition of our operator $T$.  By construction, we have for each $I\in \cali _j$ and $k\geq j$, 
\beqa
|I \cap \shad (\cali _k) | \leq 2 ^{-|j-k|} |I|.
\eeqa
This is the key fact needed to prove the estimate (\ref{offdiagonal}) used for $j,k$ far apart.  The proof of 
(\ref{offdiagonal}) occupies Section \ref{offdiag}.


\subsection{Proof of estimate \eqref{diagonal} }


Note that for each fixed $j$, there exist good collections $\calr _1, \calr _2, \dots \calr _{3\over {\delta} }$ such that $\rho (x) \in \cup _{i=1} ^{ {3\over {\delta } } }  \calr _i$ for all $x\in A_j$.  This fact, together with Theorem \ref{rare}, is already enough to establish the estimate \eqref{diagonal}:  Each $T_{j}$ is controlled by a maximal
operator $M_{\calr}$ where 
\beqa
\calr = \bigcup _{i=1} ^{3\over {\delta} } \calr _i
\eeqa
and each $\calr _i$ is a good collection.  Hence we may apply
Lemma \ref{rare} to obtain the estimate
\beqa
||T_j T_k ^*|| _2 \leq ||T_j||_2 ||T_k ||_2 \lesssim \left( \log {1\over {\delta} }\right) ^2.
\eeqa

\section{Proof of the iterative Lemma \ref{iteration}} \label{iterationproof}

We begin by introducing some notation that will help us describe the collection $\cali _I$ in the statement of the lemma.  Recall that all rectangles in question have a fixed width $w$.  
Also recall that the slope of each rectangle is in the discrete set defined at the beginning of Section \ref{outline}
and that our ``rectangles" are actually parallelograms projecting to dyadic intervals.  
For each dyadic interval $J\subseteq I$ and any $s \in S$, we define
\beqa
G_{J,s} = \{ a \in J \colon v(a) 
		\in [s-{w\over {|J|}} , s + {w\over {|J|}} )  \}
\eeqa
\beqa
S(J) = \{ s\in  S \colon |G_{J,s}| \geq \delta |J| \}.
\eeqa
In the rest of this section we will abuse notation 
and write $s$ to denote the dyadic interval centered at $s$.  The convenience of this will be apparent throughout the section.
$S(J)$ is the set of allowable slope for rectangles projecting vertically to $J$.
For $J$ dyadic with $J\subseteq I$, we will define a set of slopes $T(J)$
as follows.  The definition is inductive, starting with the largest
interval and then moving to its subintervals.  First for  $I$,
the largest interval, define
\beqa
T(I) = S(I) . 
\eeqa
Note that $T(I) $ is just the set of allowable slopes for the
interval $I$.
(Recall that the allowable slopes for an interval are those that are
at least $\delta$-popular.)  Now for smaller intervals $J$, we will
define $T(J)$ similarly, except that we will not include slopes that
have been used by an ancestor of $J$.  (By ``ancestor", we mean another
dyadic interval containing $J$.)  More precisely, having defined
$T(K)$ for $K\supsetneqq  J$, define
\beqa
T(J)  =  \{ s \in S(J) \colon s\not\supset s' \text{  for any  } s'
\in T(K), K \supsetneqq J \} .
\eeqa
For $s \in T(J)$, let
\beqa
 \mu _ J^s  = |G_{J,s} |;
\eeqa
otherwise, let $\mu _J ^s = 0$; and let 
\beqa
 \mu _ J = \sum _{s\in T(J)} \mu _J ^s .
\eeqa
It is straightforward to check that 
\begin{equation} \label{carleson}
\sum_{J \subseteq I} \mu _J \leq |I|
\end{equation}
since $G_{J,s} \cap G_{J', s'} = \emptyset$ for $s \in T(J)$ and $s' \in T(J')$ unless 
$J=J' $ and $s=s'$.


We now define the collection $\cali _I$ mentioned in the statement of the lemma.  Let $\cali _I$ be the collection of maximal subintervals $I'$ of $I$ for which 
\beqan \label{calidef}
\sum _{I' \subseteq K \subseteq I }  { {\mu _K } \over { |K| } } \geq 2.
\eeqan
We remark that 
\begin{eqnarray} \label{decay}
| \bigcup \cali _I| & \leq & | \{ a \in I \colon \sum _{ K \subseteq I }  { {\mu _K } \over { |K| } } \one _K (a) \geq 2 \} | \\
& \leq & {1\over 2} |I|
\end{eqnarray}
by Chebyshev's inequality and the Carleson condition \eqref{carleson} .  This proves the second claim of the lemma.  Of course the first claim is true by construction.
Let 
\beqa
\Theta  = \{ (J,s) \colon J \subseteq I \text{  and  } s\in T(J) \},
\eeqa
\beqa
\Theta _{bad} = \{ (J,s)\in \Theta \colon J\subseteq I' \text{  for some  } I' \in \cali _I   \} 
\eeqa
and let
\beqa
\Theta_{good} = \Theta \setminus \Theta _{bad}.
\eeqa
The following partial order on pairs in $\Theta $ will be useful:  we write 
\beqa
(J,s) \leq (J', s') 
\eeqa
whenever either $J=J'$ and the center of $s$ is less than or equal to the center of $s'$, or $J \subsetneqq J'$.  
Note that if 
$J \cap J' \neq \emptyset$, then $(J,s)$ and $(J', s')$ are comparable under the relation $\leq$.  Of course we will write  
$(J,s) < (J', s') $ to mean $(J,s) \leq (J', s') $ but $(J,s) \neq (J', s') $.
 Define the \it children \rm of a pair $(J', s')$ to be all pairs $(J,s) < (J', s')$ that are maximal with respect to this property.  (I.e., there is no pair $(J'', s'')$ such that $(J,s) < (J'', s'') < (J', s')$ .)  Let $C(J,s)$ denote the set of children of $(J,s)$.  Now we sort elements of $\Theta _{good} $ inductively.  Define $\Omega _0$ to be the set of maximal elements of $\Theta _{good} $.  Now having defined $\Omega _0, \Omega _1, \dots , \Omega _{n}$, define
\beqa
\Omega _{n+1} = \Theta _{good} \cap \left( \bigcup _{(J, s) \in \Omega _n }  C(J,s) \right).
\eeqa
Now we let 
\beqa
F_n = \{ x\in E \colon \exists (J,s)\in \Omega_n \text{ with } 
	\pi_1 (\rho (x) ) \subseteq J \text{ and }  slope(\rho (x) ) \supseteq s \},
\eeqa
\beqa
\calr _n = \{ \rho(x)\colon x \in F_n \},
\eeqa
and define
\beqa
E_{good}=  \bigcup _{n=1} ^{\infty } F_n,
\eeqa
\beqa
E_{bad} = E \setminus E_{good}.
\eeqa
This proves claims 4,5, and 6 of the lemma by construction.  
Note that if $slope (\rho (x)) \in S(\pi _1 (\rho (x))$, then there exists $(J,s) \in \Theta$ such that 
$ \pi _1 (\rho (x) \subseteq J$ and $ slope ( \rho (x) ) \supseteq s $.  If $x\in E_{bad}$, then this $(J,s) \not\in \Theta _{good}$.  Hence $(J,s) \in \Theta _{bad}$, so $\pi _1 (x) \in \pi _1 (\rho (x)) \subseteq \shad (\cali _I).$  This proves 7.
To complete the proof of the lemma, it is enough to establish the following two claims:
\begin{claim} \label{alldone}
$\Omega _{3\over {\delta } }$ is empty.  (From this it follows that $F _{3\over {\delta } }$ is empty.)
\end{claim}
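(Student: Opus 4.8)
The plan is to show that passing from $\Omega_n$ to $\Omega_{n+1}$ forces a loss of at least $\tfrac{\delta}{3}$ in the "total mass'' $\sum_{(J,s)\in\Omega_n}\mu_J^s$ (or a closely related quantity), so that the chain of $\Omega_n$'s must terminate after at most $\tfrac{3}{\delta}$ steps. The point is that $\Omega_0$ starts with total mass at most $|I|=1$ (by the Carleson-type inequality \eqref{carleson}, since the pairs in $\Omega_0$ are pairwise incomparable and hence have disjoint $G_{J,s}$), and each subsequent $\Omega_n$ must account for strictly less mass; once the accumulated deficit exceeds $1$, nothing is left. The key structural input is the definition of $\cali_I$ via the threshold \eqref{calidef}: a pair $(J,s)\in\Theta_{good}$ survives only when the sum $\sum_{J\subseteq K\subseteq I}\mu_K/|K|$ is strictly less than $2$, which caps how deep the chains of children can go.

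First I would make precise the bookkeeping quantity. For a pair $(J,s)$, associate the "column mass'' $\mu_J^s=|G_{J,s}|$, and note that along any increasing chain $(J_0,s_0)<(J_1,s_1)<\dots$ in $\Theta$ the intervals $J_0\subsetneqq J_1\subsetneqq\dots$ are nested dyadic intervals, so there are at most $\log_2(1/w)$-many of them — but the relevant control is not the number of generations per se, rather the {\it total} number of generations weighted correctly. The cleaner route: observe that for $(J,s)\in\Omega_{n+1}$ with parent $(J',s')\in\Omega_n$, we have $J\subsetneqq J'$ or ($J=J'$ and $s<s'$); in the latter case $s,s'$ are distinct slopes in $T(J)$, and in the former $J$ is a {\it proper} dyadic descendant. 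Summing $\mu_K/|K|$ over the chain of $K$ from $J$ up to $I$, each step where we actually shrink the interval contributes $\mu_K/|K|$; since $(J,s)\in\Theta_{good}$ means this running sum stays below $2$, we get a hard a priori bound on how much "interval-shrinking mass'' can accumulate. I would then argue that each pass $\Omega_n\to\Omega_{n+1}$ either consumes at least a fixed fraction $\delta$ of the $\mu$-mass available, or strictly decreases the maximal value of the running sum $\sum_{J\subseteq K\subseteq I}\mu_K/|K|$ over pairs still present — and since that running sum lives in $[0,2)$ and is built from quanta of size $\mu_K/|K|\geq \delta$ (because $s\in S(J)$ forces $\mu_J^s\geq \delta|J|$), there can be at most $2/\delta < 3/\delta$ generations.

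Concretely, the core estimate I would isolate is: \textit{for each $(J,s)\in\Omega_n$,}
\[
\sum_{\substack{(J',s')\in C(J,s)\\ (J',s')\in\Theta_{good}}}\mu_{J'}^{s'}\ \le\ \mu_J\ -\ (\text{something}),
\]
together with the fact that every pair contributing to $\Omega_n$ has $\mu_J^s\geq\delta|J|$ and the $G_{J,s}$ across a single $\Omega_n$ are disjoint. Combined with the threshold $2$ in \eqref{calidef} — which guarantees that a good pair $(J,s)$ satisfies $\sum_{J\subseteq K\subseteq I}\mu_K/|K|<2$, and each term in that sum is at least $\delta$ whenever it is nonzero and $s\in T(K)$ — one concludes that a strictly increasing chain of good pairs $(J_0,s_0)<(J_1,s_1)<\dots<(J_m,s_m)$ must have $m<2/\delta$, because each "new'' term added to the running sum as we descend is $\geq\delta$ and the total is capped by $2$. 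Since membership in $\Omega_n$ means $x$ lies at generation $n$ of such a chain, $\Omega_{n}$ is empty once $n\geq 3/\delta$, and in particular $\Omega_{3/\delta}=\emptyset$; hence $F_{3/\delta}=\emptyset$ by its definition in terms of $\Omega_{3/\delta}$.

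The main obstacle I anticipate is matching the $\tfrac{1}{\delta}$-quantization of the mass to the $\tfrac{3}{\delta}$-bound on the number of collections: one has to verify that moving to a child {\it always} forces at least one genuinely new slope $s'\in T(K)$ with $|G_{K,s'}|\geq\delta|K|$ to be counted in the running sum $\sum_{J\subseteq K\subseteq I}\mu_K/|K|$ — i.e., that children which merely change the slope within the same interval $J$ cannot be chained indefinitely. This is where the two clauses of the partial order $\leq$ and the definition of $C(J,s)$ as {\it maximal} pairs below a given one interact delicately, and where the factor $3$ (rather than $2$) presumably absorbs the finitely many same-interval slope transitions; I would handle it by a short case analysis on the two types of covering relation, bounding the same-$J$ transitions by $|S(J)|$ only after noting those also feed the Carleson sum. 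Everything else is routine summation against \eqref{carleson}.
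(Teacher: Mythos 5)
Your core idea is the same as the paper's: a chain in $\Theta_{good}$ descending to some bottom interval $J_m$ must have length at most $\tfrac{2}{\delta}$, because each chain element contributes at least $\delta$ to a running Carleson sum that is capped by $2$ precisely because $J_m$ escapes every $I'\in\cali_I$ (that is the content of \eqref{calidef}). Where the paper's formulation is cleaner than yours is exactly the place you flag as a worry, namely same-interval slope transitions. You try to track step-by-step how much the running sum $\sum_{J\subseteq K\subseteq I}\mu_K/|K|$ increases as you descend the chain, and correctly notice that a step with $J=J'$ adds nothing to that sum. The paper sidesteps this entirely: it never tracks the chain step by step. Instead it counts the \emph{whole} set $\Theta_{J_m}=\{(K,s):J_m\subseteq K\subseteq I,\ s\in T(K)\}$ and observes (a) the chain is a subset of $\Theta_{J_m}$, so $\#\Theta_{J_m}\geq 3/\delta$, and (b) $\#\Theta_{J_m}=\sum_{K}\#T(K)\leq \tfrac{1}{\delta}\sum_K\mu_K/|K|<2/\delta$ since $\mu_K=\sum_{s\in T(K)}\mu_K^s$ and each $\mu_K^s\geq\delta|K|$. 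Because $\mu_K$ already sums over all slopes in $T(K)$, a pair $(J,s)$ with a new slope at an old interval is still charged $\geq\delta$ in this accounting — there is no case analysis on the two clauses of the partial order, and no need to invoke $|S(J)|$. Your proposed fix ("bounding same-$J$ transitions by $|S(J)|$") would work but is coarser and, as written, is not spelled out enough to close the argument; and your opening "mass loss per generation'' framing (a deficit of $\delta/3$ per pass $\Omega_n\to\Omega_{n+1}$) is a red herring that you do not, and would not need to, carry out.
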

\begin{claim} \label{goodn}
For each $n$, $\calr _n$ is a good collection.  
\end{claim}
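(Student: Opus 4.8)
The plan is to verify the two requirements in Definition~\ref{good} for the collection $\calr_n = \{\rho(x) : x \in F_n\}$: that two rectangles of $\calr_n$ with the same horizontal projection have the same slope, and that $M_{\calr_n}$ is of weak type $(1,1)$. Everything hinges on one combinatorial fact, which I would isolate first: for every $n$, $\Omega_n$ is an \emph{antichain} for the order $\leq$, i.e.\ no two distinct elements of $\Omega_n$ are comparable.

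I would prove the antichain property by induction on $n$. For $n=0$, $\Omega_0$ is the set of maximal elements of $\Theta_{good}$, so it is an antichain by definition. Suppose $\Omega_n$ is an antichain. First note $\Omega_n \cap \Omega_{n+1} = \emptyset$: an element of $\Omega_{n+1}$ lies strictly below some element of $\Omega_n$, so if it also belonged to $\Omega_n$ it would give two distinct comparable elements of $\Omega_n$. Now let $a,b \in \Omega_{n+1}$ be distinct and, relabelling if necessary, assume $a < b$; choose $P_a, P_b \in \Omega_n$ with $a \in C(P_a)$ and $b \in C(P_b)$, so in particular $a < P_a$ and $b < P_b$. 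The first coordinate of $a$ is a nonempty dyadic interval contained in the first coordinates of both $b$ and $P_a$, so these two first coordinates intersect, and hence $P_a$ and $b$ are comparable by the remark following the definition of $\leq$. If $b \leq P_a$, then (since $P_a \in \Omega_n$ while $b \notin \Omega_n$, so $b \neq P_a$) we get $a < b < P_a$, contradicting that $a$ is maximal among pairs strictly below $P_a$, i.e.\ $a \in C(P_a)$. If instead $P_a \leq b$, then together with $b < P_b$ this gives $P_a < P_b$; but $P_a, P_b \in \Omega_n$ are then distinct and comparable, again a contradiction. Hence $\Omega_{n+1}$ is an antichain. (One may phrase this more structurally: $\Theta$ is a forest under $\leq$, because the set of pairs lying above any fixed pair is totally ordered --- their first coordinates all contain a common nonempty interval --- so each non-maximal pair has a unique parent; $\Theta_{good}$ is an up-set and hence also a forest; and $\Omega_n$ is exactly its $n$-th level, which in a forest is automatically an antichain. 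In that formulation one must check that, because $\Theta_{good}$ is an up-set, the parent of a good pair inside $\Theta_{good}$ coincides with its parent in $\Theta$, so forming children in $\Theta$ does not disturb the levels.)

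Granting the antichain property, the first requirement of Definition~\ref{good} follows quickly. Suppose $R_1, R_2 \in \calr_n$ have $\pi_1(R_1) = \pi_1(R_2) = Q$; write $R_i = \rho(x_i)$ with $x_i \in F_n$ and pick $(J_i, s_i) \in \Omega_n$ with $Q \subseteq J_i$ and $\theta(R_i) \supseteq s_i$. Since $\theta(R_i)$ is a slope interval at the scale of $Q$ while $s_i$ is a slope interval at the (coarser or equal) scale of $J_i$, comparing widths shows that the containment $\theta(R_i) \supseteq s_i$ forces the scales to agree, i.e.\ $J_i = Q$ and $\theta(R_i) = s_i$. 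Hence $(Q, \theta(R_1))$ and $(Q, \theta(R_2))$ both lie in $\Omega_n$; if $\theta(R_1) \neq \theta(R_2)$ they would be two distinct elements of $\Omega_n$ with the same first coordinate, hence comparable, contradicting the antichain property. Therefore $\theta(R_1) = \theta(R_2)$, and in particular $R_1$ and $R_2$ have the same slope.

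It remains to check that $M_{\calr_n}$ is of weak type $(1,1)$. By construction $\calr_n$ is a collection of parallelograms all of the fixed width $w$, each projecting to a dyadic interval, and --- by what was just shown --- carrying a single slope on each such dyadic column. The weak $(1,1)$ bound for maximal operators over collections of exactly this form is the structural fact about one-variable vector fields established in \cite{B1} (and it is also what powers Theorem~\ref{rare}); quoting it completes the proof that $\calr_n$ is good. The one genuinely delicate step in this argument is establishing that $\Omega_n$ is an antichain; once that is in hand the slope consistency is routine dyadic bookkeeping, and the weak $(1,1)$ estimate is imported from \cite{B1} rather than reproved here.
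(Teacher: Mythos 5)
Your overall approach mirrors the paper's: establish that each $\Omega_n$ is an antichain for $\leq$ (equivalently, by the remark that intersecting first coordinates force comparability, that distinct elements of $\Omega_n$ have disjoint first coordinates), and deduce both halves of goodness from this. Your induction proving the antichain property is correct and is essentially the text's argument, written out a bit more carefully.

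There is, however, a genuine error in the step deducing slope consistency. You assert that since $\theta(R_i)$ lives at the scale of $Q$ while $s_i$ lives at the coarser-or-equal scale of $J_i$, the containment $\theta(R_i)\supseteq s_i$ forces $J_i=Q$. This reverses the scaling. A shorter rectangle has a \emph{wider} slope interval: $\theta(R)$ has width $w/L(R)$, so if $|Q|\leq|J_i|$ then $\theta(R_i)$ has width $w/|Q|\geq w/|J_i|$, which is the width of $s_i$, and $\theta(R_i)\supseteq s_i$ is perfectly consistent with $Q\subsetneq J_i$. Nothing forces the scales to agree, and the definition of $F_n$ only requires $\pi_1(\rho(x))\subseteq J$, not equality. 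The fix bypasses the scale comparison entirely: from $Q\subseteq J_1\cap J_2$ and the antichain property you already have $(J_1,s_1)=(J_2,s_2)=:(J,s)$; then $\theta(R_1)$ and $\theta(R_2)$ are two dyadic intervals of the common width $w/|Q|$ both containing the fixed dyadic interval $s$, and since at most one dyadic interval of a given scale contains a given dyadic interval, they coincide. This is what the paper relies on when it says the disjointness of the $J$'s immediately gives goodness. (A small aside: for the weak $(1,1)$ half you invoke \cite{B1}, whereas the paper argues it directly and more elementarily from the observation that a collection with one direction per disjoint dyadic column yields a weak-$(1,1)$ maximal operator; either route is acceptable.)
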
 
Recall that \it good \rm collections are defined in Definition \ref{good}.
\begin{proof} [Proof of Claim \ref{goodn}]
First note that if $slope (\rho (x)) \in S(\pi _1 (\rho (x))$, then there exists $(J,s) \in \Theta$ such that 
$ \pi _1 (\rho (x) \subseteq J$ and $ slope ( \rho (x) ) \supseteq s $.  Now note that if $\calr _0$ is a collection of rectangles such that $slope (R) \supseteq s $ for all $R\in \calr_0$, then $M_{\calr_0}$ is weak (1,1).  This is because all rectangles in $\calr_0$ essentially point in the same direction.  Similarly, if we have a disjoint collection of intervals $\calj$ and 
slopes $\{s_J\}_{J\in \calj}$ such that for each $R\in \calr _0$, we have $J \in \calj$ such that 
$\pi _1 (R) \subseteq J$ and $slope(R) \supseteq s_J$, then $M_{\calr _0}$ is again weak (1,1).  

Hence the claim follows immediately from the following fact:
For every $n=0,1,2,...$,
if $(J_1, s_1), (J_2, s_2) \in \Omega _n$ with 
$(J_1, s_1)\neq  (J_2, s_2) $, then $J_1 \cap J_2 = \emptyset $.

This fact follows from an easy induction argument:  Since $\Omega _0$ contains only maximal elements in $\Theta _{good} $, we cannot have any distinct $(J_1, s_1), (J_2, s_2) \in \Omega _0$ with 
$J_1\subseteq  J_2 $, because in that case it is not possible for both 
$(J_1, s_1)$ and $ (J_2, s_2) $ to be maximal.  Now suppose the claim is true for distinct pairs in
$ \Omega _{n-1}$, and suppose $(J_1, s_1), (J_2, s_2) \in \Omega _n$.  By definition of $\Omega _n$, there exist $(J' _1 , s' _1), (J' _2, s' _2) \in \Omega _{n-1}$ such that $(J_i, s_i ) \leq ( J' _i , s' _i ) $ for $i=1,2$.  This implies, in particular, that $J_i \subseteq J' _i$ for $i=1,2$.  By our induction hypothesis, we know that either 
$J' _1 $ does not intersect $J' _2 $, or $(J' _1 , s' _1) = (J' _2, s' _2) $.  In the first case, it is obvious that $J_1 \cap J_2 = \emptyset $.  In the second case, we argue as we did in the $n=0$ case:  if, say,  $J_1 \subseteq J_2$, then it is not possible for both of $(J_1, s_1)$ and $ (J_2, s_2) $ to be maximal children of $(J' _1 , s' _1) $.  This proves the claim.
\end{proof}

\begin{proof} [Proof of Claim \ref{alldone}]
We begin by defining, for any dyadic $K\subseteq I$, 
\beqa
\Theta _K = \{ (J,s) \colon K\subseteq J\subseteq I \text{ and } s\in T(J) \}.
\eeqa
Note that if $s\in T(J)$, then ${ {\mu ^s _J } \over {|J|} } \geq  \delta $, so 
\beqa
 \# \left( \Theta _K \right)  &=& \sum _{K\subseteq J\subseteq I } \# \left( T(J) \right)  \\
 &=&   \sum _{K\subseteq J\subseteq I } \sum _{s \in T(J) } 1 \\
& \leq & {1\over {\delta } }  \sum _{K\subseteq J\subseteq I } \sum _{s \in T(J) } { {\mu ^s _J } \over {|J|} } \\
& = &  {1\over {\delta } }  \sum _{K\subseteq J\subseteq I }    { {\mu  _J } \over {|J|} } .
\eeqa
If the claim were false, then there would be a sequence
\beqa
(J_1, s_1) > (J_2, s_2) > \dots > (J_{3\over {\delta } } , s_{3\over {\delta } } ) 
\eeqa
with $(J_i, s_i) \in \Theta _{good}$ for $i=1,2, \dots , {3\over {\delta } }$.  But this implies 
\beqa
{3\over {\delta } } \leq \# \left (\Theta _{J_{3\over {\delta } } } \right) 
	\leq {1\over {\delta } } \sum _{ J_{3\over {\delta } } \subseteq K \subseteq  I } { {\mu _K } \over { |K| } },
\eeqa
which is impossible since $J_{3\over {\delta } } \not\subseteq I'$ for any $I' \in \cali _I$.  See the definition of 
$\cali _I$ in \eqref{calidef}.
This proves the claim.
\end{proof}



\section{Proof of estimate  \eqref{offdiagonal} } \label{offdiag}


In this section, we establish the estimate \eqref{offdiagonal}.  Recall that we use this estimate when $|j-k|$ is rather large.  To prove it, we take advantage of the rapid decay of $|J \cap \shad (\cali _k ) |$ whenever $J\in \cali _j$ and $k$ is much larger than $j$.  Because of this decay, we have that rectangles chosen by points in $A_k$ will only be able to intersect rectangles $R$ chosen by points in $A_j$ on very small subsets of $R$.  Essentially all of the analysis of this section takes place on a fixed interval $J\in \cali _j$.  We formalize these ideas below.  In this section, we use notation from Sections \ref{lemmas} and \ref{iterationproof}.  The reader may wish to ignore the dependence on $J$ in some of the notation below and imagine that $\cali _j$ consists of a single interval.

Note that $T_j T_k^* = (T_k T_j ^* ) ^*$, so it is enough to control
$|| T_k T_j ^* ||$ in the case $j\leq k$.  So fix $j$ and $k$ with $j \leq
k$.  Recall that
\beqa
T_j f(x) = \one _{A_j} (x) {1\over {\rho (x) } } \int _{\rho (x)} f,
\eeqa
where $\rho $ is a fixed linearizing function.  Fix $J \in \cali _j$.  By Lemma \ref{iteration} and the definition of $A_j$ following the lemma, we know there are collections 
$\calr ^j _1, \calr ^j _2, \dots \calr ^j _{3\over {\delta } }$ such that if $x\in A_j$, then $\rho (x) \in \cup _n \calr ^j _n$.  Further, $\pi (\rho (x)) \subseteq \shad(\cali _j) $ for all $x\in A_j$.  Let
\beqa
A_{j,J} = \{ x\in A_j \cap ( J \times [0,1] )  \}
\eeqa
and for each $n=1,2,\dots , {3\over {\delta } }$, let
\beqa
A_{j,J,n} =\{ x\in A_{j,J} \colon \rho (x) \in \calr ^j _n \}.
\eeqa
With this notation, if $x\in A_{j,J}$ we define
\beqa
T_{j, J, n} f(x) = \one _{A_{j,J, n}} (x)  T_j f(x)
\eeqa
and
\beqa
T_{j, J} f(x) = \sum_{n=1} ^{3\over {\delta } } T_{j, J, n} f(x).
\eeqa
Note that
\beqa
T_j = \sum _{J\in \cali _j} T_{j, J},
\eeqa
and that $T_{j,J} ^*f=T^* _{j,J} (f \one _{J\times [0,1]} )$ is supported on $J\times[0,1]$.
To prove the estimate \eqref{offdiagonal}, it is enough to prove
\begin{equation} \label{piece}
||T_k T_{j, J, n}^* || \lesssim  2^{-|j-k|}
\end{equation}
for every $J\in \cali _j$ and every $ n \in \{1,2,\dots, {3\over {\delta } } \}$, because then
\beqa
|| T_k T_j ^* f||_2 
	& \leq & \sum _{J\in \cali _j} \sum_{n=1} ^{3\over {\delta } } 
		||T_k T_{j, J, n} ^* (f \one_{J\times{[0,1]} } )||_2 \\
& \leq &  2^{-|j-k|} \sum _{J\in \cali _j} 
		\sum_{n=1} ^{3\over {\delta } }  ||(f \one_{J\times{[0,1]} } )|| _2 \\
& \lesssim &  2^{-|j-k|} {1\over {\delta } } ||f||_2.
\eeqa
To prove \eqref{piece}, and hence \eqref{offdiagonal}, it is enough to prove the following two
claims:
\begin{claim} \label{firstprop}
For each $n=1,2,\dots {3\over {\delta } }$, and each $x\in A_k$, 
\beqa
T_k T_{j, J, n}^* f(x) \leq M_2 T_{j, J, n}^* f(x),
\eeqa
where $M_2$ is the standard Hardy-Littlewood maximal operator along vertical line segments.  
\end{claim}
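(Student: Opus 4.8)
\textbf{Proof proposal for Claim \ref{firstprop}.}

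The plan is to unwind the definitions on both sides and reduce the pointwise bound to an elementary averaging estimate along vertical segments. Fix $n$ and a point $x \in A_k$. By definition, $T_k g(x) = \frac{1}{|\rho(x)|}\int_{\rho(x)} g$ for the rectangle $\rho(x)$ chosen by the linearization at $x$; applying this with $g = T_{j,J,n}^* f$ gives
\beqa
T_k T_{j,J,n}^* f(x) = \frac{1}{|\rho(x)|}\int_{\rho(x)} T_{j,J,n}^* f.
\eeqa
The key structural input is that $\rho(x)$, being a rectangle chosen by a point of $A_k$, has width $w$ and slope in $S_k$ (the discrete slope set from Section \ref{outline}), while $T_{j,J,n}^*f$ is built from adjoints of averages over rectangles in the good collection $\calr_j^n$, all of which have width $w$ and slopes lying in a single interval $s$ (this is precisely the goodness/single-direction property established in the proof of Claim \ref{goodn}). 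I would first argue that, because $j \le k$ and the decay $|J \cap \shad(\cali_k)| \le 2^{-|j-k|}|J|$ forces the relevant rectangles at scale $k$ to be much shorter than those at scale $j$ — or more simply, because all these rectangles point in essentially the same direction and have the same width — the integral of $T_{j,J,n}^* f$ over $\rho(x)$ is dominated by an average of $T_{j,J,n}^*f$ over a vertical segment through $x$ of comparable measure.

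Concretely, the second step is the geometric observation: since $T_{j,J,n}^*f$ is supported on $J \times [0,1]$ and is (up to the reduction to parallelograms over dyadic intervals) constant, or at least controlled, along the direction of the rectangles in $\calr_j^n$, one can foliate $\rho(x)$ by vertical line segments and bound
\beqa
\frac{1}{|\rho(x)|}\int_{\rho(x)} T_{j,J,n}^* f \lesssim \sup_{\ell} \frac{1}{|\ell|}\int_{\ell} T_{j,J,n}^* f \le M_2 T_{j,J,n}^* f(x),
\eeqa
where the supremum is over vertical segments $\ell$ meeting $\rho(x)$ and passing near $x$, and $M_2$ is the Hardy–Littlewood maximal operator along vertical lines. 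The nonnegativity of $T_{j,J,n}^*f$ (it is an adjoint of an averaging operator applied to a function we may take nonnegative, or we simply replace $f$ by $|f|$) is what lets us pass freely between the average over $\rho(x)$ and a one-dimensional average.

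I expect the main obstacle to be making the foliation argument honest: one must check that the vertical segments cutting $\rho(x)$ have length comparable to $L(\rho(x))$ up to the width $w$, and that for each such segment the portion lying in the support of $T_{j,J,n}^*f$ is captured by a single vertical maximal average centered appropriately at $x$ — this uses that the slope of $\rho(x)$ differs from the common slope $s$ of $\calr_j^n$ by at most $O(w/L)$, so $\rho(x)$ is, on the scale of its own dimensions, essentially a union of vertical fibers, and the relevant averages transfer to $M_2$ at the point $x$ with only an absolute constant lost. Everything else is bookkeeping with the definitions of $A_k$, $A_{j,J,n}$, and the good collections $\calr_j^n$ recalled above.
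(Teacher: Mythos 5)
Your identification of the central ingredient — that $T_{j,J,n}^*f$ is constant along a common direction of the rectangles in $\calr_j^n$ — is correct, but the way you deploy it has a real gap, and your proposed repair rests on a premise that is not true. The trouble is with the chain
\beqa
\frac{1}{|\rho(x)|}\int_{\rho(x)} T_{j,J,n}^*f \lesssim \sup_{\ell}\frac{1}{|\ell|}\int_{\ell}T_{j,J,n}^*f \le M_2T_{j,J,n}^*f(x).
\eeqa
The first inequality, foliating $\rho(x)$ into its vertical fibers (each of length $\approx w$), is fine as a weighted-average bound, but the second is not: $M_2g(x)$ supremizes over vertical intervals \emph{through} $x$, while essentially every vertical fiber of $\rho(x)$ misses $x$, and a short vertical average taken at some other point of $\rho(x)$ is not controlled by $M_2g(x)$. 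You notice this and propose to fix it by asserting that the slope of $\rho(x)$ differs from the common slope of $\calr_j^n$ by $O(w/L)$. That assertion is unjustified and false in general: $\rho(x)$ is chosen at scale $k$, its slope is an element of $S(\pi_1(\rho(x)))$ governed by the values of $v$ on the small interval $\pi_1(\rho(x))$, while the common slope $\theta$ of the rectangles of $\calr_j^n$ over the relevant interval $J'\in\cali_{j+1}$ was determined back at scale $j$; nothing in the construction forces them to be close.

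The paper's argument sidesteps any such comparison of slopes. One first shows that \emph{all} $R\in\calr_j^n$ with $\pi_1(R)\supseteq J'$ share a single slope $\theta$ — this uses the disjointness of the intervals $J$ appearing in $\Omega_n$, established in the proof of Claim \ref{goodn}, together with the nesting $\pi_1(\rho(x))\subseteq J'$. Hence $T_{j,J,n}^*f$ is genuinely constant on line segments of slope $\theta$ in $J'\times[0,1]$. Writing $Y_\theta$ for an axis orthogonal to direction $\theta$, this constancy means the average of $T_{j,J,n}^*f$ over \emph{any} line segment $L\subseteq J'\times[0,1]$ depends only on the projection of $L$ onto $Y_\theta$, and not at all on the slope of $L$. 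In particular, the average over the long axis of $\rho(x)$ (which is what $T_k$ essentially computes, since all rectangles have fixed width $w$) equals the average over a vertical segment with the same $Y_\theta$-projection; and since $x\in\rho(x)$, the $Y_\theta$-projection of $x$ lies in that interval, so that vertical segment passes through $x$ and the average is $\le M_2T_{j,J,n}^*f(x)$. The reason the bound is clean is precisely that the slope of $\rho(x)$ plays no role once the constancy in direction $\theta$ has been isolated — which is the point your foliate-by-vertical-fibers sketch misses.
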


\begin{claim} \label{secondprop}
For $\lambda > 0$,
\beqa
|\{ x\in A_k \colon M_2 T_{j, J, n}^* f(x) > \lambda \}
       \lesssim 2^{-|j-k|} |\{x\in \bbr ^2 \colon M_2 T_{j, J, n}^* f(x) > \lambda \}|.
\eeqa
\end{claim}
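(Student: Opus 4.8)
The plan is to reduce the claim to the single key geometric fact recorded after Lemma \ref{iteration}, namely that $|J \cap \shad(\cali_k)| \leq 2^{-|j-k|}|J|$ for every $J \in \cali_j$ and every $k \geq j$. First I would observe that $T^*_{j,J,n}f$ is supported inside $J \times [0,1]$: by construction $T_{j,J,n}f$ is supported on $A_{j,J,n} \subseteq J \times [0,1]$, and the adjoint of the averaging operator $x \mapsto \one_{A_{j,J,n}}(x)\,\frac{1}{|\rho(x)|}\int_{\rho(x)} f$ spreads mass from a point $x \in A_{j,J,n}$ over the rectangle $\rho(x)$, whose horizontal projection lies in $\shad(\cali_j)$ and in fact (since $x \in J \times [0,1]$ and $\pi_1(\rho(x)) \subseteq I$ with $I$ the dyadic interval used to build $\cali_j$ at the relevant stage, hence $\pi_1(\rho(x)) \subseteq J$ because the stopping intervals are nested dyadically) in $J$ itself. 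So $T^*_{j,J,n}f$ is supported in $J \times [0,1]$, and therefore $M_2 T^*_{j,J,n}f$, being a one-dimensional Hardy--Littlewood maximal function acting only in the vertical direction, is also supported in $J \times [0,1]$.

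Next I would control the left-hand set by intersecting with the horizontal location of points of $A_k$. The point is that $A_k \subseteq \shad(\cali_k) \times [0,1]$: indeed $A_k = \bigcup_{I' \in \cali_{k-1}} (E_{k-1,I'})_{good}$, and in the construction $\cali_k = \bigcup_{I' \in \cali_{k-1}} \cali_{I'}$ with each $(E_{k-1,I'})_{good} \subseteq (E_{k-1,I'})$, but more to the point the bad-part recursion forces $E_k \subseteq \shad(\cali_k) \times [0,1]$, and $A_k \subseteq E_{k-1}$; chasing the definitions gives $\pi_1(A_k) \subseteq \shad(\cali_k)$. Combining this with the support statement above,
\[
\{x \in A_k \colon M_2 T^*_{j,J,n}f(x) > \lambda\} \subseteq \bigl(J \cap \shad(\cali_k)\bigr) \times [0,1].
\]
Meanwhile $M_2$ acts only in the vertical variable, so for any horizontal coordinate $a$, the vertical slice $\{b \colon M_2 T^*_{j,J,n}f(a,b) > \lambda\}$ has the same measure whether or not $a \in \shad(\cali_k)$; Fubini then gives that the left-hand measure is at most $|J \cap \shad(\cali_k)| \cdot \sup_a |\{b \colon M_2 T^*_{j,J,n}f(a,b)>\lambda\}|$, while the full set $\{x \in \bbr^2 \colon M_2 T^*_{j,J,n}f(x)>\lambda\}$ has measure exactly $\int_{\pi_1(J\times[0,1])} |\{b \colon \cdots\}|\,da$. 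To make this comparison clean I would note that $T^*_{j,J,n}f(a,\cdot)$ is nonzero for a definite fraction of $a \in J$ — but in fact the cleanest route is: since the vertical maximal function of a fixed horizontal slice only depends on that slice, and all slices live over $J$, we get
\[
\bigl|\{x \in A_k \colon M_2 T^*_{j,J,n}f(x) > \lambda\}\bigr|
\le \frac{|J \cap \shad(\cali_k)|}{|J|}\,\bigl|\{x \in \bbr^2 \colon M_2 T^*_{j,J,n}f(x) > \lambda\}\bigr|,
\]
where I am using that the horizontal support of $M_2 T^*_{j,J,n}f$ is contained in $J$ and that restricting the horizontal variable to a subset $S \subseteq J$ cuts the super-level measure by at most $|S|/|J|$ provided the super-level slices are "spread evenly" — which is not literally true, so the honest argument is the Fubini one: the left side is $\int_{J \cap \shad(\cali_k)} g(a)\,da$ and the right side is $\int_J g(a)\,da$ where $g(a) = |\{b \colon M_2T^*_{j,J,n}f(a,b)>\lambda\}| \le 1$; hence the left side is at most $|J \cap \shad(\cali_k)| \le 2^{-|j-k|}|J| \le 2^{-|j-k|}\int_J g(a)\,da \cdot \frac{1}{\text{(lower bound on avg of }g)}$.

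The one genuine gap in the Fubini argument is that $\int_J g(a)\,da$ could in principle be much smaller than $|J|$, which would wreck the bound $|J \cap \shad(\cali_k)| \le 2^{-|j-k|} \int_J g$; so the main obstacle is to show $\int_J g(a)\,da \gtrsim |J|$, or equivalently that $M_2 T^*_{j,J,n}f$ is supported on a subset of $J\times[0,1]$ of measure comparable to $|J|$ whenever it is nonzero at all. I would handle this by recalling that $M_2$ is a maximal average over \emph{vertical} segments and $T^*_{j,J,n}f$ is an average spread over rectangles of length $2^k w$ and width $w$ with horizontal projection all of $J$ (up to the stopping-time scale): wherever $T^*_{j,J,n}f$ is nonzero, it is nonzero on a rectangle whose vertical shadow over each $a \in J$ has length $\gtrsim 2^k w$, so $g(a) \gtrsim 2^k w > 0$ for a.e. $a$ in the horizontal shadow of that rectangle, which is all of $\pi_1(\rho(x)) $; summing over the (essentially disjoint) rectangles produced by the different $(J',s) \in \Omega_n$ with $J' \subseteq J$ shows their horizontal shadows tile $J$ up to the null set, giving $\int_J g \gtrsim |J|$ (with an absolute implied constant). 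Feeding this back yields $|\{x\in A_k\colon M_2T^*_{j,J,n}f>\lambda\}| \lesssim 2^{-|j-k|}|\{x\in\bbr^2\colon M_2T^*_{j,J,n}f>\lambda\}|$, which is the claim; the routine verifications are the support computations and the "tiling" bookkeeping, and I would keep those brief.
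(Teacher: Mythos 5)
Your Fubini reduction is the right framework, but the patch you propose for the ``genuine gap'' you (correctly) identify does not hold, and it is precisely where the paper's argument does something different that you are missing.

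Your argument reduces to showing that $\int_J g(a)\,da \gtrsim |J|$, where $g(a) = |\{b\colon M_2 T^*_{j,J,n}f(a,b)>\lambda\}|$, so that $|J\cap\shad(\cali_k)| \le 2^{-|j-k|}|J|$ can be converted into the claimed bound. This lower bound is false in general: for $\lambda$ in a moderate range, the superlevel set can have horizontal shadow that is a small subinterval of $J$ (for instance when $T^*_{j,J,n}f$ is dominated by the contribution of a single short rectangle $R$ with $\pi_1(R) \subsetneq J$), and nothing you have said prevents that small shadow from sitting inside $\shad(\cali_k)$. Your tiling heuristic conflates the support of $T^*_{j,J,n}f$ (which does spread over all of $J$, roughly) with the superlevel set at a fixed $\lambda$ (which need not). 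In the bad scenario both sides of the claimed inequality would be comparable, and the factor $2^{-|j-k|}$ would be lost.

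The missing idea is an \emph{exact equidistribution} statement that replaces any lower bound on $\int g$: one must first decompose $J$ into the intervals $J'\in\cali_{j+1}$, since $A_k\cap(J\times[0,1])$ is supported on $\bigcup_{J'\in\cali_{j+1}}(J'\times[0,1])$; and then use (as in the proof of Claim~\ref{firstprop}) that all $R\in\calr^j_n$ with $\pi_1(R)\supseteq J'$ share a common slope $\theta$, so that $T^*_{j,J,n}f$, and hence $M_2T^*_{j,J,n}f$, is constant along slope-$\theta$ segments crossing $J'\times[0,1]$. This constancy means the superlevel set restricted to $J'\times[0,1]$ is a union of such segments, so its vertical slice measure $g(a)$ is the \emph{same} for every $a\in J'$. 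Fubini then gives the ratio
\[
\frac{|\{x\in A_k\cap(J'\times[0,1])\colon M_2T^*_{j,J,n}f>\lambda\}|}{|\{x\in J'\times[0,1]\colon M_2T^*_{j,J,n}f>\lambda\}|}\le\frac{|J'\cap\shad(\cali_k)|}{|J'|}\lesssim 2^{-|j-k|}
\]
with no need for any lower bound on the superlevel measure, and summing over $J'\in\cali_{j+1}$ inside $J$ finishes. The constancy-along-segments observation on each $J'$ is the crux, and it is exactly what your proposal lacks.
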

With these two claims, we see that
\beqa
||T_k T_{j, J, n} ^* f ||^2 _2 & \lesssim &
\int _0 ^{\infty} \lambda 2^{-|j-k|} |\{x\in \bbr ^2 \colon M_2 T_{j,
J, n}^* f(x) > \lambda \}| d\lambda \\
&=& 2^{-|j-k|} || M_2 T_{j, J, n}^* f || \lesssim 2^{-|j-k|} ||f||_2 ^2,
\eeqa
since $M_2$ and $T_{j,J,n}$ are bounded on $L^2$ with uniform constants, which proves estimate \eqref{piece}, and hence the estimate \eqref{offdiagonal}.  ($T_{j,J,n}$ is bounded because each $\calr _n ^j $ is a good collection.)
We turn to the proofs of these two claims.  
\begin{proof} [Proof of Claim \ref{firstprop} ]
Fix any $K \in \cali _k$ such that $K\subseteq J$.  (If $x \notin J \times [0,1]$, then 
$T_k T_{j, J, s}^* f(x)=0.$)  There exists $J' \in \cali _{j+1}$ with 
$K \subseteq  J' \subseteq J$.  Note that all $R \in \calr ^j _n$ with 
$\pi_1(R) \supseteq J'$, have the same slope.  For suppose such $R_1$, $R_2$ have different slopes.  Then by the fact mentioned at the beginning of the proof of Claim \ref{goodn}, and by the definition of the sets $F_n$ given in the last section, we know 
$\pi_ 1(R_1 ) \cap \pi_1 (R_2) = \emptyset $.  But this contradicts the claim that 
$\pi_1(R_1)$ and $\pi _1(R_2)$ both contain $J'$.

Hence all $R \in \calr ^j _n$ with 
$\pi_1(R) \supseteq J'$, have the same slope; let's call it $\theta$.  This implies that 
$T_{j, J, n}^* f(x)$ is constant along line segments contained in $J'$ with slope 
$\theta$.  Let $Y_{\theta}$ be any line orthogonal to a line segment with slope 
$\theta$.   Because $T_{j, J, n}^* f(x)$ is constant along line segments contained in 
$J' \times [0,1] $ with slope $\theta$, we know that if $L$ is a line segment contained in $J' \times [0,1] $, then 
\beqa
{1\over {|L|}} \int _L T_{j, J, n}^* f(x) 
\eeqa
depends only on the projection of $L$ onto the axis $Y_{\theta}$, and in particular, it does not depend on the slope of $L$.  (Of course the integral here is with respect to one-dimensional Lebesgue measure.)  Hence
\beqa
{1\over {|L|}} \int _L T_{j, J, n}^* f(x)  \leq M_2 T_{j, J, n}^* f(x).
\eeqa
Since $T_k$ is essentially an average over line segments, this finishes the proof of the claim.
\end{proof}
\begin{proof} [Proof of Claim \ref{secondprop} ]
The set $A_k \cap (J \times [0,1] ) $ is supported on the set
\beqa
\bigcup _{J' \in \cali_{j+1} } (J' \times [0,1] ) .
\eeqa
Hence it suffices to prove that for any $J' \in \cali_{j+1}$, we have
\beqa
|\{ x\in A_k\cap (J' \times [0,1] ) & \colon & M_2 T_{j, J, n}^* f(x) > \lambda \}| \\
& \lesssim & \\
 2^{-|j-k|} |\{x\in (J' \times [0,1] ) & \colon & M_2 T_{j, J, n}^* f(x) > \lambda \}| .
\eeqa
So we fix attention on a particular $J' \in \cali_{j+1}$.  By the argument in the proof of the previous claim, we know that all $R \in \calr ^j _n$ with 
$\pi_1(R) \supseteq J'$, have the same slope $\theta$, which implies that 
$T_{j, J, n}^* f(x)$ is constant along line segments contained in $J'$ with slope 
$\theta$.  This further implies that $M_2 T_{j, J, n}^* f(x)$ is constant along segments of length $|J'|$ with slope $\theta$.  But since 
\beqa
|J' \cap \shad (\cali _k ) | \lesssim 2^{-|j-k|} |J'|,
\eeqa
we have proved the claim.
\end{proof}


\section{Proof of Lemma \ref{rare} } \label{rareproof}


In this section, we prove Theorem \ref{rare}.  The argument given here is due to Katz \cite{K1}.  Recall that we assume $\calr_1 , \calr _2, ..., \calr_N$ are good collections of
rectangles and $\calr = \cup _{j=1} ^N \calr _j$.  To prove the theorem, we prove the weak-type estimate
\beqa
|\{ M_{\calr } f > \lambda \} | \lesssim \log N  { { ||f||^2 _2 } \over {\lambda ^2 } }.
\eeqa
To prove the weak-type bound above, we linearize the maximal operator as above, and prove restricted strong-type bounds for the linearization.  That is, we prove
\beqa
|| T^* \one _E || ^2 _2 \lesssim \log N |E|
\eeqa
for any set $E$, where again we write $T$ to denote a particular linearization of $M_{\calr}$.  As before, we will let $\rho \colon [0,1]^2 \rightarrow \calr $ denote the linearization.  Of course the estimates are independent of the particular linearization.    

To upgrade this weak-type estimate to the desired strong-type estimate, one only needs to apply standard interpolation theorems.  (Since $T$ is trivially bounded on $L^{\infty}$, we may interpolate to obtain strong-type estimates for $p>2$.  Then interpolate with the trivial weak (1,1) estimate of 
$ \sim N$ to obtain the claimed strong bounds on $L^2$.)

For the rest of the section, we focus on proving this retricted strong-type estimate for $T^*$.  It is convenient to assume that $\pi_1 (R) $ is a dyadic interval; we do so.
For a set $F$ and any interval $I$, let 
\beqa
F_I = \{ x\in F \colon \pi_1 (\rho (x) ) \subseteq I \}.
\eeqa
For any rectangle $R$ and any set $F$, define 
\beqa
B ^F _R = {1\over {|R|}} \int _R T^*( \one _{F_{ \pi _1 (R)} } ).
\eeqa
and
\beqa
\nu ^F  _R = | \{ x\in F \colon \rho (x) = R \} |.
\eeqa
(Recall that $\pi _1 (R)$ is the projection of $R$ onto the horizontal axis.)
The quantity $B_R$ is called the \it badness \rm of the rectangle $R$.
Before we proceed any further, we present one computation that is crucial for understanding this section.
\begin{claim}
For a set $F$, 
\beqa
T^* (\one _F ) (x) =  \sum _{R\in \calr}   { {  \nu^F _R \one _R (x) } \over {|R| } }.
\eeqa
\end{claim}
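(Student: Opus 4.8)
The plan is to unwind the definition of the adjoint $T^*$ directly from the linearization. Recall that $T=T_\rho$ is the honest linear operator $Tf(x)=|\rho(x)|^{-1}\int_{\rho(x)}f$ for $x\notin X$ and $Tf(x)=0$ for $x\in X$, so its adjoint is characterized by $\langle Tf,g\rangle=\langle f,T^*g\rangle$ for $f,g\in L^2(\bbr^2)$. First I would compute, by Fubini,
\[ \langle Tf,g\rangle=\int_{\bbr^2\setminus X}\frac{g(x)}{|\rho(x)|}\int_{\rho(x)}f(y)\,dy\,dx=\int_{\bbr^2}f(y)\Bigl(\int_{\bbr^2\setminus X}\frac{g(x)}{|\rho(x)|}\one_{\rho(x)}(y)\,dx\Bigr)dy, \]
which identifies $T^*g(y)=\int_{\bbr^2\setminus X}|\rho(x)|^{-1}g(x)\,\one_{\rho(x)}(y)\,dx$. (Fubini applies since the integrand is nonnegative once $f,g$ are replaced by their absolute values, and the identity then extends to general $f,g$ by linearity.)

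Second I would specialize to $g=\one_F$, giving $T^*(\one_F)(y)=\int_{F\setminus X}|\rho(x)|^{-1}\one_{\rho(x)}(y)\,dx$, and then partition the domain of integration according to the value of $\rho$. After the standard reductions in Section \ref{outline} the collection $\calr$ is countable (indeed finite), and the sets $\{x:\rho(x)=R\}$, $R\in\calr$, are pairwise disjoint and cover $\bbr^2\setminus X$; on each such set the integrand is the constant function $|R|^{-1}\one_R(y)$. Hence the integral splits as $\sum_{R\in\calr}|R|^{-1}\one_R(y)\,|\{x\in F:\rho(x)=R\}|=\sum_{R\in\calr}|R|^{-1}\nu^F_R\,\one_R(y)$, which is exactly the claimed formula.

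The only points requiring a word of care — and the closest thing here to an obstacle — are bookkeeping: that $\rho$ is measurable (so that the slices $\{x:\rho(x)=R\}$ and the function $x\mapsto\one_{\rho(x)}(y)$ are measurable), which follows from the reduction to a finite subcollection noted in Section \ref{outline}, and the interchange of sum and integral, which is legitimate by that same finiteness (or, failing that, by monotone convergence, all terms being nonnegative). There is no analytic difficulty; the content of the claim is simply the observation that the transpose of ``average over $\rho(x)$'' is the weighted sum of indicators of rectangles, each weighted by the amount of mass that $\rho$ sends to that rectangle.
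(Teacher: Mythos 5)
Your proposal is correct and follows essentially the same route as the paper: write $T$ as an integral operator with kernel $\one_{\rho(x)}(y)/|\rho(x)|$, read off the adjoint by swapping the roles of $x$ and $y$, and then partition the integration over $F$ according to the level sets $\{x:\rho(x)=R\}$. You are slightly more explicit than the paper about deriving the adjoint kernel via Fubini and about measurability/finiteness, but the computation is the same.
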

This is a weighted count of the rectangles in $\calr$ that contain $x$.  
\begin{proof}
Recall that 
\beqa
T f (x) = \int { { \one _{\rho (x)} (y) } \over {|\rho (x) | } } f (y) dy.
\eeqa
This means that 
\beqa
T^* (\one _F ) (x) &=& \int { { \one _{\rho (y)} (x) } \over {|\rho (y) | } } \one _F (y) dy \\
   & = & \sum _{R\in \calr} \int _{\{y \colon \rho (y) = R\}}  { { \one _{\rho (y)} (x) } \over {|\rho (y) | } } \one _F (y) dy  \\
   & = &  \sum _{R\in \calr}   { { \one _R (x) } \over {|R| } } \int _{\{y \colon \rho (y) = R\}}    \one _F (y) dy  \\
   & = &  \sum _{R\in \calr}   { {  \nu^F _R \one _R (x) } \over {|R| } }.
\eeqa
\end{proof}
An immediate corollary of this is the estimate
\beqan \label{newcarleson}
\int T^* (\one _F )  = \sum _{R\in \calr} \nu^F _R \leq |F|.
\eeqan
Because of this computation, we see that the badness $B_R$ is a weighted count of the rectangles $R'$ that intersect $R$ and that are essentially shorter than $R$.  The weighting depends on the measure $\nu ^F _{R'}$, the length of $R'$, and the angle between $R$ and $R'$.  (If $R$ and $R'$ intersect with smaller angle, then $\one _{R'} (x)$ will be supported on a larger portion of $R$.)

\begin{claim} \label{reformulate}
\beqa
\int \left( T^* (\one _E ) \right) ^2 \lesssim \sum _{R\in \calr} \nu^E _R B_R.
\eeqa
\end{claim}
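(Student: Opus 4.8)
The plan is to expand the square using the pointwise identity for $T^*(\one_E)$ from the previous claim, and then regroup the resulting double sum over pairs of rectangles so that it is controlled by the badness sum $\sum_R \nu^E_R B_R$. First I would substitute $T^*(\one_E)(x) = \sum_{R\in\calr}\nu^E_R\one_R(x)/|R|$ into one of the two factors, obtaining
\[
\int \big(T^*(\one_E)\big)^2
= \sum_{R\in\calr}\frac{\nu^E_R}{|R|}\int_R T^*(\one_E)
= \sum_{R,R'\in\calr}\frac{\nu^E_R\,\nu^E_{R'}\,|R\cap R'|}{|R|\,|R'|}.
\]
Then I would observe that a summand is nonzero only if $R\cap R'\neq\emptyset$, and in particular only if $\pi_1(R)\cap\pi_1(R')\neq\emptyset$; since $\pi_1(R)$ and $\pi_1(R')$ are dyadic intervals, one must contain the other. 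The double sum is symmetric in $R$ and $R'$, so it is at most twice the sub-sum restricted to ordered pairs with $\pi_1(R')\subseteq\pi_1(R)$.

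The key point is then to recognize the inner sum as a badness. For fixed $R$, using that $E_{\pi_1(R)}=\{x\in E\colon \pi_1(\rho(x))\subseteq\pi_1(R)\}$, and hence $\nu^{E_{\pi_1(R)}}_{R'}=\nu^E_{R'}$ exactly when $\pi_1(R')\subseteq\pi_1(R)$ (and $\nu^{E_{\pi_1(R)}}_{R'}=0$ otherwise), I would write
\[
\sum_{R'\colon\,\pi_1(R')\subseteq\pi_1(R)}\frac{\nu^E_{R'}\,|R\cap R'|}{|R'|}
= \int_R \sum_{R'\in\calr}\frac{\nu^{E_{\pi_1(R)}}_{R'}\one_{R'}}{|R'|}
= \int_R T^*\big(\one_{E_{\pi_1(R)}}\big)
= |R|\,B_R,
\]
by the very definition of $B_R=B^E_R$. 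Combining this with the previous paragraph gives $\int\big(T^*(\one_E)\big)^2\leq 2\sum_{R\in\calr}\nu^E_R B_R$, which is the claimed inequality.

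I expect the only delicate point to be this middle step: matching the portion of the double sum coming from rectangles $R'$ with $\pi_1(R')\subseteq\pi_1(R)$ against the definition of $B_R$, i.e. checking that restricting $E$ to $E_{\pi_1(R)}$ precisely kills the contributions of those rectangles whose projection is not contained in $\pi_1(R)$. The passage from the full double sum to the one-sided nested sub-sum via symmetry is the one slightly non-routine move; everything else is bookkeeping with the dyadic structure of the projections, and there is no genuine analytic difficulty in the argument.
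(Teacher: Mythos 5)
Your proof is correct and follows essentially the same route as the paper: expand the square via the pointwise formula for $T^*(\one_E)$, use symmetry and dyadic nesting to restrict the double sum to $\pi_1(Q)\subseteq\pi_1(R)$, and identify the resulting inner sum as $T^*(\one_{E_{\pi_1(R)}})$ so that the integral over $R$ produces $|R|\,B_R$. You spell out the middle step (that restricting to $E_{\pi_1(R)}$ precisely retains those $R'$ with $\pi_1(R')\subseteq\pi_1(R)$) more explicitly than the paper does, which is a minor improvement in exposition rather than a different argument.
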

Because of this claim and the definition of $B_R$, we see that $||T^* (\one _E) ||^2 _2 $ is essentially a count of quantities like 
$|R_1 \cap R_2|$, which is to be expected in an $L^2$ estimate of this operator.
This claim follows from a straightforward computation which we carry out shortly.  We will combine it with the following lemma to prove the theorem.

\begin{lemma} \label{decaylemma}
Let $\calr _0$ be a good collection of rectangles.  Let 
\beqa
\cals _k = \{ R \in \calr _0 \colon B_R \in [k-1, k) \}.
\eeqa
Then 
\beqa
|\bigcup _{R \in \cals _{k}} R | \lesssim 2^{-ck} |E|.
\eeqa
\end{lemma}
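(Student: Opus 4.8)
The plan is to exploit the fact that $\calr_0$ is a good collection, so that all rectangles in $\calr_0$ projecting to a common horizontal interval point in the same direction, together with the weak $(1,1)$ bound for $M_{\calr_0}$ implicit in goodness. The first step is to rewrite the badness: using the computation $T^*(\one_{F})(x) = \sum_{R'\in\calr} \nu^F_{R'} \one_{R'}(x)/|R'|$ with $F = E_{\pi_1(R)}$, we have
\beqa
B_R = \frac{1}{|R|}\int_R \sum_{R'\in\calr} \frac{\nu^{E_{\pi_1(R)}}_{R'} \one_{R'}}{|R'|}
= \sum_{R'\in\calr} \frac{\nu^{E_{\pi_1(R)}}_{R'}}{|R'|}\cdot\frac{|R\cap R'|}{|R|}.
\eeqa
Since $\nu^{E_{\pi_1(R)}}_{R'}$ is supported on rectangles $R'$ with $\pi_1(R')\subseteq\pi_1(R)$ (hence $R'$ essentially shorter than $R$), $B_R$ is a weighted overlap count of shorter rectangles meeting $R$; in particular $B_R \ge \nu^{E_{\pi_1(R)}}_R/|R|$ is at least the ``density of $E$ in $R$'' coming from $R$ itself. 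The point of the large-badness hypothesis $B_R\ge k-1$ is that $R$ is heavily overlapped by shorter rectangles carrying $E$-mass.

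The second and main step is a Córdoba–Fefferman / Katz-style iteration. I would show that if $U_k = \bigcup_{R\in\cals_k} R$, then a positive proportion of $U_k$ is covered with high multiplicity by rectangles from $\calr_0$ whose associated $E$-mass is large, and then feed this back through the weak $(1,1)$ bound. Concretely: on $U_k$ the function $\sum_{R'} \nu^E_{R'}\one_{R'}/|R'|$ (a multiple of $T^*\one_E$) is, on average over each $R\in\cals_k$, at least $\sim k$; splitting the scales of $R'$ into $O(\log(1/\delta))$ or $O(\log N)$ dyadic bands and pigeonholing, one finds a single scale band contributing $\gtrsim k/(\log)$ on a fixed fraction of $R$. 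For that band, all the relevant $R'$ with a fixed $\pi_1(R')$ point the same way (goodness), so the sub-maximal operator is weak $(1,1)$; applying the weak $(1,1)$ bound to the level set $\{M_{\calr_0}(\one_E\text{-weighted}) \gtrsim k/\log\}$ forces $|U_k|\lesssim (\log/k)\,|E|$ — and then bootstrapping the argument by removing the high-density core and iterating $\sim k$ times upgrades this to the geometric decay $|U_k|\lesssim 2^{-ck}|E|$. (This self-improvement is exactly the mechanism in Katz's proof: each unit increment of badness costs a constant factor in measure.)

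The third step is bookkeeping: one must track the ``$E$-mass budget'' $\sum_{R'}\nu^E_{R'}\le|E|$ from \eqref{newcarleson} so that the iteration does not lose more than a constant per step, and verify that restricting to a single $\calr_0$ (rather than the union of $3/\delta$ good collections) is what makes the directional/weak-$(1,1)$ input legitimately applicable. I expect the main obstacle to be making the iteration rigorous: precisely formulating the ``remove the dense core and repeat'' step so that at each stage the remaining set still has badness $\ge k - O(1)$ with respect to a genuinely good subcollection, and ensuring the constants compound multiplicatively to give $2^{-ck}$ rather than merely polynomial decay in $k$. A secondary technical point is handling the scale-pigeonholing cleanly — the honest statement needs the contribution at the chosen scale to dominate, which is where one uses that there are only logarithmically many scales once the width $w$ is fixed.
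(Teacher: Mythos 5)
Your first step is essentially correct but overcomplicated: since $T^*\one_{E_{\pi_1(R)}}\leq T^*\one_E$ pointwise, every $x\in R\in\cals_k$ satisfies $M_{\calr_0}(T^*\one_E)(x)\geq B_R\geq k-1$, so the weak $(1,1)$ bound from goodness together with $\|T^*\one_E\|_1\leq|E|$ (estimate \eqref{newcarleson}) already gives $|\bigcup_{R\in\cals_k}R|\lesssim |E|/k$ with no scale pigeonholing and no logarithmic loss. The genuine gap is the second step: the ``bootstrap by removing the high-density core and iterating $\sim k$ times'' is exactly the heart of the lemma, and you leave it as a black box (and indeed flag it yourself as the main obstacle). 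The difficulty is that iterating requires a precise inheritance statement: one must produce a set $E'$ with $|E'|\leq\frac12|E|$ such that every $R$ with $B_R^E>C$ satisfies \emph{both} $R\subseteq E'$ \emph{and} $B_R^E\leq C+B_R^{E'}$, so that large badness with respect to $E$ forces comparably large badness with respect to a set of half the measure. Merely knowing $|U_k|\lesssim |E|/k$, or restricting $E$ to some ``core'', does not by itself guarantee that the surviving rectangles retain badness $\geq k-O(1)$ relative to the smaller set.

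This inheritance statement is the paper's Lemma \ref{key}, and its proof is where the real work lies: for each horizontal interval $I$ one splits the badness into an ``in'' and an ``out'' part relative to tripled vertical intervals $3K$, runs a stopping time in the vertical variable to select the intervals $K\in\calb_I$ where $B_{I,K}^{\text{out}}\geq\lambda_0$ but $B_{I,3K}^{\text{out}}<\lambda_0$, shows (Claim \ref{universal}) that the ``out'' contribution to $B_R$ is then bounded by the absolute constant $20\lambda_0$, and sets $E'=\bigcup_I\bigcup_{K\in\calb_I}(I\times 3K)$, so the excess badness is absorbed into $B_R^{E'}$. The measure bound $|E'|\leq\frac12|E|$ is itself not a bare application of the weak $(1,1)$ inequality: one first shows that for $K\in\calb_I$ a fixed proportion of horizontal slices $I\times a$, $a\in K$, carry large averages of $T^*\one_E$, so that $I\times 3K$ is captured by $M_2\one_F$ with $F=\{M_{\calr_0}T^*\one_E\geq\lambda_0/2\}$, and only then does the weak $(1,1)$ bound for $M_{\calr_0}$ (plus \eqref{newcarleson}) close the estimate. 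None of this construction appears in your plan, so as written the proposal identifies the correct mechanism (``each unit of badness costs a constant factor of measure'') but does not prove it; the scale-pigeonholing and the per-step logarithmic losses you anticipate are not part of the actual argument and would in any case be harmless only if they did not compound over the $\sim k$ iterations.
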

We can already use these facts to prove the theorem of this section.  
Let 
\beqa
\cals  _{n,k} = \{ R \in \calr _n \colon B_R \in [k-1, k) \}.
\eeqa 
By the claim, 
\beqa
\int \left( T^* (\one _E ) \right) ^2 &\lesssim & \sum _{R\in \calr} \nu ^E _R B_R \\
 &=& \sum _{n=1} ^{N} \sum _{R\in \calr _n} \nu^E _R B_R \\
&= & \sum _{k=1} ^{\infty} \sum _{n=1} ^{N } 
				\sum _{R\in \cals _{n,k} } \nu^E _R B_R \\
&\lesssim& \sum _{k=1} ^{\infty}  k \sum _{n=1} ^{N } 
				\sum _{R\in \cals _{n,k} } \nu^E _R \\
&=& (\star)
\eeqa
Note that $\sum _{R\in \cals _{n,k} } \nu^E _R  \leq |\bigcup \cals _{n,k} |$ and that 
$\sum _{R\in\calr} \nu _R ^E \leq |E|$.  This first estimate is useful when $k$ is large, and the second when $k$ is small.
So by the lemma,  
\beqa
(\star) &\lesssim & \sum _{k=1} ^{\sim \log N}  \log N \sum _{n=1} ^{N } 
		\sum _{R\in \cals _{n,k} } \nu^E _R  
    + \sum _{k \sim \log N} ^{\infty} k \sum _{n=1} ^{N } 
		\sum _{R\in \cals _{n,k} } \nu^E _R  \\
&\lesssim & \log N |E| + \sum _{k \sim \log N} ^{\infty} k N  2^{-ck} |E| \\
&\lesssim & \log N |E|.
\eeqa 
This proves Theorem \ref{rare} modulo Claim \ref{reformulate} and Lemma \ref{decaylemma}.   First we prove Claim \ref{reformulate}.
\begin{proof}[Proof of Claim \ref{reformulate}]
The proof is a straightforward computation: 
\beqa
\int \left( T^* (\one _E ) \right) ^2 
 &=& \int \left( \sum _{R\in \calr}   { {  \nu^F _R \one _R (x) } \over {|R| } } \right) ^2 \\
&=& \int \sum _{R\in \calr} \sum _{Q \in \calr} 
     { {  \nu^F _R \one _R (x) } \over {|R| } } { {  \nu^F _Q \one _Q (x) } \over {|Q| } } \\
&\lesssim & \sum _{R\in \calr}  \nu^F _R { 1 \over {|R| } } 
	\int _R \sum_{Q\in \calr \colon \pi_1 (Q)\subseteq \pi_1 (R) } 
		{ {  \nu^F _Q \one _Q (x) } \over {|Q| } },
\eeqa
where we have used symmetry to restrict the sum in the final integral to rectangles $Q$ that are essentially shorter than $R$.  To finish the proof, we need only note that 
\beqa
\sum_{Q\in \calr \colon \pi_1 (Q)\subseteq \pi_1 (R) } 
		{ {  \nu^F _Q \one _Q (x) } \over {|Q| } } = 
	T^* (\one _{F_{\pi_1 (R) } } ).
\eeqa
\end{proof}
To prove Lemma \ref{decaylemma}, we iterate the following key lemma.  
\begin{lemma}\label{key}
Let $E$ be a set.  Let $\calr _0$ be a good collection of rectangles.  There exists a set $E' $ such that $|E'| \leq {1\over 2} |E|$ and such that for all 
$R \in \calr $, either
\beqa
B_R ^E \leq C
\eeqa
or
\beqa
R \subseteq E' \text{  and  } B_R ^E \leq C + B_R ^{E'} .
\eeqa
Here $C$ is a universal constant.  
\end{lemma}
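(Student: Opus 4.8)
The plan is to run a Calder\'on--Zygmund stopping-time adapted to the good collection $\calr_0$, on the function $g := T^*(\one_E) = \sum_{R' \in \calr} |R'|^{-1}\nu_{R'}^E \one_{R'}$, which by \eqref{newcarleson} satisfies $\int g \le |E|$. First I would set $E' := \{ x : M_{\calr_0}(g)(x) > C_0 \}$ for a large universal constant $C_0$. Since $\calr_0$ is good its maximal operator is weak $(1,1)$, and for the collections occurring here the constant is universal (see the proof of Claim \ref{goodn}); hence $|E'| \le C' C_0^{-1}\|g\|_1 \le C' C_0^{-1}|E| \le \tfrac12 |E|$ once $C_0$ is chosen large, which is the size bound in the lemma. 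Moreover $E'$ swallows every $R \in \calr_0$ with large badness: if $B_R^E > C_0$ then, since $E_{\pi_1(R)} \subseteq E$ and $T^*$ preserves order on nonnegative functions, $B_R^E \le |R|^{-1}\int_R g$; as $R \in \calr_0$ this forces $M_{\calr_0}(g)(x) \ge |R|^{-1}\int_R g > C_0$ for every $x \in R$, i.e.\ $R \subseteq E'$.

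This yields the dichotomy immediately: if $B_R^E \le C_0$ we are in the first alternative, and otherwise $R \subseteq E'$, so it remains only to prove $B_R^E \le C_0 + B_R^{E'}$ when $R \subseteq E'$. For that I would use the crucial computation together with Fubini to write
\beqa
B_R^E = \frac{1}{|R|} \int_{E_{\pi_1(R)}} \frac{|R \cap \rho(y)|}{|\rho(y)|}\, dy ,
\eeqa
and split the integral according to whether $\rho(y) \subseteq E'$. On the part where $\rho(y) \subseteq E'$ one has $y \in \rho(y) \subseteq E'$ and $\pi_1(\rho(y)) \subseteq \pi_1(R)$, so that part of the domain lies inside $E'_{\pi_1(R)}$ and its contribution is at most $|R|^{-1}\int_{E'_{\pi_1(R)}} |\rho(y)|^{-1}|R \cap \rho(y)|\, dy = B_R^{E'}$. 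Grouping the remaining $y$'s by the value of $\rho(y)$, it therefore suffices to bound $\sum_{R'} |R|^{-1}|R'|^{-1}|R \cap R'|\, \nu_{R'}^E$, the sum running over $R' \in \calr$ with $\pi_1(R') \subseteq \pi_1(R)$ and $R' \not\subseteq E'$, by a universal constant.

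I expect this last bound to be the crux. The structure to exploit: every $R'$ in the sum is strictly shorter than $R$; each such $R'$ meets the complement of $E'$ while $R \subseteq E'$, so an $R'$ overlapping $R$ on a non-negligible fraction of itself must be nearly parallel to $R$ --- hence stays near $R \subseteq E'$ over most of $\pi_1(R')$, which limits how many such $R'$ can lie over a given dyadic interval $J = \pi_1(R')$ without being contained in $E'$ --- while a transverse $R'$ has $|R \cap R'| \lesssim w^2/\alpha$, with $\alpha$ the angle between $R$ and $R'$, so $|R \cap R'|/|R'|$ is small; goodness of $\calr_0$ gives bounded local overlap inside $\calr_0$; and the Carleson bound $\sum_{R'} \nu_{R'}^E \le |E|$ (with local refinements) controls the masses. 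Summing over the dyadic scales $J \subseteq \pi_1(R)$ and the few admissible directions at each scale should then give a convergent geometric series with universal sum. It may be technically cleaner to enlarge $E'$ at the outset --- e.g.\ replacing each rectangle of $\calr_0$ with large badness by a fixed dilate of it, and enlarging $C_0$ accordingly --- so that nearly-parallel $R'$ are absorbed and only genuinely transverse $R'$ survive in the sum. Given Lemma \ref{key} in this form, Lemma \ref{decaylemma} follows by iteration: with $E_0 = E$ and $E_{j+1}$ the set produced by Lemma \ref{key} applied to $E_j$, a rectangle with $B_R^E > mC_0$ is forced into $E_m$, and $|E_m| \le 2^{-m}|E|$.
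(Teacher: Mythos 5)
Your choice of $E'$ as the superlevel set $\{M_{\calr_0} T^*(\one_E) > C_0\}$ gets the easy half of the lemma (the size bound via weak (1,1), and that $B_R^E > C_0$ forces $R \subseteq E'$) but does not support the harder half, and you correctly flag the remaining estimate as the crux. The gap is real: you need
\[
\sum_{\substack{R' \in \calr,\ \pi_1(R') \subseteq \pi_1(R) \\ R' \not\subseteq E'}} \frac{|R \cap R'|}{|R|\,|R'|}\,\nu_{R'}^E \ \lesssim \ 1 ,
\]
and neither of the mechanisms you gesture at closes it. The near-parallel case does not self-absorb: a rectangle $R'$ almost parallel to $R$ and heavily overlapping $R \subseteq E'$ can still stick out of $E'$ (so $R' \not\subseteq E'$), and there is no a priori cap on how many such $R'$ exist over a fixed base. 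In the transverse case the Carleson bound $\sum_{R'} \nu_{R'}^E \le |E|$ gives a bound scaling like $|E|$, not a universal constant; and goodness of $\calr_0$ only controls overlaps among rectangles of $\calr_0$, whereas the sum ranges over all of $\calr$. Also note that the observation $R' \not\subseteq E' \Rightarrow B_{R'}^E \le C_0$ applies only to $R' \in \calr_0$, so it does not control the terms coming from the other subcollections $\calr_n$.

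The paper sidesteps this by choosing a structurally different $E'$: for each base interval $I$ it runs a vertical stopping time, picking maximal intervals $K$ whose \emph{out-badness} $B_{I,K}^{\text{out}}$ (the average over $I \times K$ of $T^*$ applied to the mass carried by rectangles escaping $3K$) exceeds a threshold $\lambda_0$ while $B_{I,3K}^{\text{out}} < \lambda_0$, and sets $E' = \bigcup_I \bigcup_{K} (I \times 3K)$. The split of $B_R^E$ is then into rectangles staying inside $I \times 3K$ (those are literally inside $E'$, so their contribution is at most $B_R^{E'}$) versus rectangles escaping $3K$. The latter is bounded by $20\lambda_0$ by a geometric averaging argument (Claim \ref{universal} in the paper): any escaping rectangle meets a fixed fraction of the horizontal strips of $I \times 3K$, so its average over a single strip $I \times a$ is controlled by its average over the whole box $I \times 3K$; combined with the stopping condition $B_{I,3K}^{\text{out}} < \lambda_0$ this gives the uniform bound. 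The box structure of $E'$ and the stopping rule on out-badness are precisely what your superlevel-set $E'$ lacks, and they are what make the crux estimate provable; your suggested enlargement of $E'$ by dilates of bad rectangles goes in a related direction but is not a substitute. The size bound $|E'| \le \tfrac12 |E|$ is then recovered in the paper by covering $E'$ with a superlevel set of $M_2 \one_F$, where $F = \{M_{\calr_0} T^*(\one_E) \ge \lambda_0/2\}$ is essentially your $E'$, so your set reappears as an intermediate object rather than as the final $E'$.
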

\subsection{Proof that Lemma \ref{key} implies Lemma \ref{decaylemma}  }
Given Lemma \ref{key}, we proceed as follow.  Define $E_0= E$.  Apply the lemma to find $E_1$ with $|E_1| \leq {1\over 2} |E_0|$ such that for every $R\in\calr _0$, either 
\beqa
B_R ^E \leq C,
\eeqa 
or 
$R\subseteq E'$ and 
\beqa
B_R ^{E_0} \leq B_R ^{E_1} + C.
\eeqa
Repeat to find $E_2, E_3, E_4, \dots$, with $|E_{j+1}| \leq {1\over 2} |E_{j}|$ such that for every $R\in\calr _0$, either 
\beqa
B_R ^{E_j} \leq C
\eeqa
 or $R \subseteq E_{j+1}$ and
\beqa
B_R ^{E_j} \leq B_R ^{E_{j+1} } +C .
\eeqa 
Now suppose $R$ is such that 
\beqa
B_R ^{E_0} \geq Ck
\eeqa
 for some integer $k\geq 1$.  Then we know $R\subseteq E_1$ and 
\beqa
B_R ^{E_0} \leq C + B_R ^{E_1}.
\eeqa
This implies
\beqa
B_R ^{E_1} \geq C (k-1).
\eeqa
Similarly, this implies $R\subseteq E_2$, $B_R ^{E_0} \leq 2C + B_R ^{E_2}$, and 
\beqa
B_R ^{E_2} \geq C (k-2).
\eeqa
Iterating, we see that $R \subseteq E_{k-1}$.  This implies 
\beqa
| \bigcup _{ R \in \cals _{Ck}} R| \lesssim 2^{-k} |E|,
\eeqa
which proves Lemma \ref{decaylemma}.

\subsection{Proof of Lemma \ref{key} }

Fix any intervals $I, K$.  Without loss of generality, we assume that the rectangles in $\calr _0$ that project vertically to $I$ have slope zero.  This is a notational convenience only.  We need some notation to define the set $E'$ from the statement of the lemma.  In the following definitions, dependence on the set $E$ is suppressed.  Define
\beqa
\cala _{I, K} ^{\text{in}} = \{ R \colon \pi_1(R) \subseteq I \text{ and }  \pi_2 (R) \subseteq 3K \},
\eeqa
\beqa
\cala _{I, K} ^{\text{out}} = \{ R \colon \pi_1(R) \subseteq I \text{ and }\pi_2 (R) \nsubseteq 3K \},
\eeqa
\beqa
E_{I,K} ^{\text{in}} = \{ x\in E \colon \rho (x) \in \cala _{I, K} ^{\text{in}} \}, 
\eeqa
\beqa
E_{I,K} ^{\text{out}} = \{ x\in E  \colon \rho (x) \in \cala _{I, K} ^{\text{out}} \} ,
\eeqa
\beqa
B_{I,K} ^{\text{in}} = {1 \over {|I||K| } } \int _{I\times K} T^* (\one _{E_{I,K} ^{\text{in}}  } ),
\eeqa
\beqa
B_{I,K} ^{\text{out}} = {1 \over {|I||K| } } \int _{I\times K} T^* (\one _{E_{I,K} ^{\text{out}}  } ).
\eeqa

Note that for any axis parallel rectangle $R$ with $\pi_1 (R) = I$, we have
\beqa
B_R ^E = {1\over {|R|}} \int _R T^* (\one _{E_{I,K} ^{\text{in}} } ) + {1\over {|R|}} \int _R T^* (\one _{E_{I,K} ^{\text{out}} } )
\eeqa
for any interval $K$.


Let $\calb _I$ be the collection of intervals $K$ such that 
\beqa
B_{I,K} ^{\text{out}} \geq \lambda _0, 
\eeqa
but such that 
\beqa
B_{I,3K} ^{\text{out}} < \lambda _0
\eeqa
 where 
$\lambda _0 \geq 1$ is a universal constant to be specified later.  
We now define the set $E'$ from the statement of Lemma \ref{key}: let
\beqa
E'=\bigcup _{ I } \bigcup _{K \in \calb _I } ( I\times 3 K ).
\eeqa
We also define the the auxiliary set 
\beqa
F = \{x \colon M_{\calr_0} T^* \one _E \geq {{\lambda _0}\over 2} \}.
\eeqa
We will show that $|E'|\leq C|F|$ and then that $|F|\leq {{|E|} \over {2C}}$.  To prove the second estimate we need only the weak (1,1) estimate for $M_{\calr _0} $.  To prove the first estimate we need the following claim.
\begin{claim} \label{universal}
If $K \in \calb _I$, and $\pi_1 (R) = I$ and $\pi _2 (R) \subseteq K$, then 
\beqa
{1\over {|R|}} \int _R T^* (\one _{E_{I,K} ^{\text{out}} }) \leq 20 \lambda _0.
\eeqa 
\end{claim}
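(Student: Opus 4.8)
The plan is to use the pointwise identity for $T^*(\one_F)$ proved above, namely $T^*(\one_F)(x)=\sum_{R'\in\calr}\nu^F_{R'}\,\one_{R'}(x)/|R'|$, to rewrite $\int_R T^*(\one_{E_{I,K}^{\text{out}}})$ as a weighted count of the rectangles $R'\in\cala_{I,K}^{\text{out}}$ meeting $R$, and then to reduce matters to an average over $I\times 3K$, where the defining inequality $B_{I,3K}^{\text{out}}<\lambda_0$ of $\calb_I$ can be used.

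The key geometric observation I would isolate first is this: if $R'\in\cala_{I,K}^{\text{out}}$ meets $R\subseteq I\times K$, then $\pi_1(R')\subseteq I$, and $\pi_2(R')$ meets $K$ but (by definition of $\cala_{I,K}^{\text{out}}$) is not contained in $3K$, so $|\pi_2(R')|\ge|K|\ge|\pi_2(R)|$ and moreover $|\pi_2(R')\cap 3K|\ge|K|$; that is, every rectangle contributing to the integral is tall relative to $K$. Using the slope discretization of Section \ref{outline} (which forces the tilt $|\te(R')|\,|\pi_1(R')|$ to dominate the width $w$, so $|\pi_2(R')|$ is comparable to $|\te(R')|\,|\pi_1(R')|$), a one-dimensional slicing estimate for such a parallelogram against horizontal strips gives, uniformly over the contributing $R'$,
\beqa
|R'\cap R|\ \lesssim\ \frac{|\pi_2(R)|}{|\pi_2(R')|}\,|R'| ,\qquad |R'\cap(I\times 3K)|\ \gtrsim\ \frac{|K|}{|\pi_2(R')|}\,|R'| ,
\eeqa
hence $|R'\cap R|\lesssim \frac{|\pi_2(R)|}{|K|}\,|R'\cap(I\times 3K)|$. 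Summing against the weights $\nu^E_{R'}/|R'|$ (these are the masses appearing in the pointwise formula, since for a contributing $R'$ the sets $\{x\in E_{I,K}^{\text{out}}:\rho(x)=R'\}$ and $\{x\in E:\rho(x)=R'\}$ coincide) and using $|R|=|I|\,|\pi_2(R)|$, this reduces the claim to a bound for $\frac1{|I|\,|3K|}\int_{I\times 3K}T^*(\one_{E_{I,K}^{\text{out}}})$.

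To bound that average I would split $E_{I,K}^{\text{out}}$ according to whether the chosen rectangle also escapes $9K$. Since $\cala_{I,3K}^{\text{out}}\subseteq\cala_{I,K}^{\text{out}}$, we may write $\one_{E_{I,K}^{\text{out}}}=\one_{E_{I,3K}^{\text{out}}}+\one_{E^{\text{short}}}$ with $E^{\text{short}}=E_{I,K}^{\text{out}}\setminus E_{I,3K}^{\text{out}}$. The first piece contributes exactly $T^*(\one_{E_{I,3K}^{\text{out}}})$, whose average over $I\times 3K$ is $B_{I,3K}^{\text{out}}<\lambda_0$ because $K\in\calb_I$. Every rectangle appearing in the second piece satisfies $\pi_1(R')\subseteq I$ and $\pi_2(R')\subseteq 9K$, hence $R'\subseteq I\times 9K$, so $E^{\text{short}}\subseteq I\times 9K$; therefore, by the Carleson-type bound $\int T^*(\one_{E^{\text{short}}})=\sum_{R'}\nu^{E^{\text{short}}}_{R'}\le|E^{\text{short}}|$,
\beqa
\frac1{|I|\,|3K|}\int_{I\times 3K}T^*(\one_{E^{\text{short}}})\ \le\ \frac{|E^{\text{short}}|}{|I|\,|3K|}\ \le\ \frac{|I\times 9K|}{|I|\,|3K|}\ =\ 3 .
\eeqa
Adding the two pieces bounds the average over $I\times 3K$ by a universal multiple of $\lambda_0$ (recall $\lambda_0\ge 1$), and combining with the reduction above and tracking the absolute constants from the slicing step yields the stated bound $20\lambda_0$.

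The main obstacle is exactly the slicing step. The point of the claim is subtle: $T^*(\one_{E_{I,K}^{\text{out}}})$ may genuinely have average $\ge\lambda_0$, even $\gg\lambda_0$, over $I\times K$ (that is what $B_{I,K}^{\text{out}}\ge\lambda_0$ says), yet its average over every $I$-wide subrectangle is $O(\lambda_0)$ — and this holds only because the rectangles comprising $T^*(\one_{E_{I,K}^{\text{out}}})$ are all tall relative to $K$ and so meet a thin rectangle $R$ in only a $\sim|\pi_2(R)|/|\pi_2(R')|$ fraction of their area. Making this quantitative requires the parallelogram-slicing inequalities above to hold with absolute constants even in the degenerate regimes where $|\te(R')|$ is very small or $w$ is comparable to $|\pi_2(R)|$, which is precisely where the slope discretization of Section \ref{outline} is invoked; once those estimates are secured, the remainder is bookkeeping.
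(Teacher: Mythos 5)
Your proposal is correct and follows essentially the same route as the paper's proof: both compare the average over $R$ to the average over $I\times 3K$ using the fact that rectangles in $\cala_{I,K}^{\text{out}}$ meeting $R$ are vertically tall (so their overlap with a thin horizontal strip is a small fraction of their overlap with $I\times 3K$), then split $E_{I,K}^{\text{out}}$ into $E_{I,3K}^{\text{out}}$ (controlled by $B_{I,3K}^{\text{out}}<\lambda_0$) and a remainder supported in $I\times 9K$ (controlled by the Carleson bound \eqref{newcarleson}). The paper performs the comparison on one-dimensional slices $I\times\{a\}$ and averages over $a\in\pi_2(R)$, whereas you work directly with the strip $R$; this is a cosmetic difference, and your observation that the remainder lives in $I\times 9K$ (where the paper's text writes $I\times 3K$) is in fact the more careful bookkeeping.
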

\begin{proof}
We will show that for any $a\in K$, 
\beqa
{1\over {|I|}} \int _{I\times a} T^* (\one _{E_{I,K} ^{out} } ) \leq 20 \lambda_0,
\eeqa
which implies the claim.  (Of course the integration immediately above is with respect to $1$-D Lebesgue measure.)  
Note that if $R\in \cala_{I,K}^{out}$ and $R$ intersects $I\times a$, then 
$R$ intersects $I\times a'$ for $a'$ in a set of measure ${1\over 3} |3K|$.  This implies that if 
$R\in \cala_{I,K}^{out}$, then 
\beqa
{1\over {|I||3K|}} \int_{I\times 3K} \one _R 
	\geq {1\over {10}} {1\over {|I|}} \int_{I\times a} \one _R 
\eeqa
for every $a\in K$.  Also note that 
\beqa
\int T^*(\one _{E_{I,K} ^{out}  \setminus  E_{I,3K} ^{out} }) 
\leq \int T^* (\one _{I\times 3K} )
\leq |I||3K|
\eeqa 
by \eqref{newcarleson}, because $T^*$ is positive and 
because $E_{I,K} ^{out}  \setminus  E_{I,3K} ^{out} \subseteq I\times 3K$.
Combining this with the fact that $B_{I, 3K} ^{out} < \lambda $, we know 
\beqa
{1\over {|I|}} \int _{I\times a} T^* (\one _{E_{I,K} ^{out} } ) 
  & \leq & 10  {1\over {|I||3K| } } \int _{I \times 3K} T^* (\one _{E_{I,K} ^{out} } ) \\
& = &   10  {1\over {|I||3K| } } 
	\int _{I \times 3K} T^*(\one _{E_{I,K} ^{out}  \setminus  E_{I,3K} ^{out} }) \\
&+&	10 {1\over {|I||3K| } } \int _{I \times 3K} T^* (\one _{E_{I,3K} ^{out} }) \\
& \leq & 10 + 10{1\over {|I||3K| } } \int _{I \times 3K} T^* (\one _{E_{I,3K} ^{out} } )\\
& \leq & 20 \lambda _0.
\eeqa
\end{proof}
Consider an interval $I$ with $K\in\calb _I$.
By the proof of the previous claim, we know that there exists $A \subseteq K$, such that 
$|A| \geq {1\over {20}} |K|$, and such that 
\beqa
{1\over {| I | } } \int _{I \times a} T^* ( \one _{E_{I,K} ^{out} } ) 
	\geq {{\lambda _0 } \over 2}
\eeqa
for all $a\in A$, where here the integral is taken with respect to one-dimensional measure on  $I \times a$.  This is because the proof gives an upper bound on such averages; this, together with the lower bound on $B_{I,K} ^{\text{out}}$ yields the claimed lower bound for many $a \in A$.
Hence if $a\in A$, then for all $x \in I \times a$, we have
\beqa
M_{\calr _0}  T^* (\one _E ) (x) \geq {{\lambda _0} \over 2}.
\eeqa
This implies that if $x\in (I\times 3K)$, then 
\beqa
M_2 \one _F (x) \geq {1\over {40}}.
\eeqa
Hence
\beqa
|E'| &=&
\left| \bigcup _{I } \bigcup _{K \in \calb _I } ( I\times 3 K ) \right| \\
	& \leq & |\{ x\colon M_2 \one _F (x) \geq {1\over {40} } \} | \\
	& \leq & C |F| \\
	& = & C | \{x \colon M_{\calr _0} T^* \one _E \geq {{\lambda _0} \over 2} \} | \\ 
	& \leq & {C \over {\lambda _0 } } || ( T^* \one _E ) || _1 \leq {1\over 2} |E|,
\eeqa
provided $\lambda _0$ is large enough.  Here we used the fact that $\calr _0$ is a good collection of rectangles.  
%

We just proved the claim about the size of $E'$.
If $B_R \geq  \lambda _0$, then there exists $K$ such that $K \in \calb _{\pi _1 (R)}$.  Further, by Claim \ref{universal} and the positivity of $T^*$, 
\beqa
B_R &=& {1\over {|R|}} \int _R T^* \one _{E_{I,K} ^{\text{in}} } 
		+ {1\over {|R|}} \int _R T^* \one _{E_{I,K} ^{\text{out}} } \\
	& \leq & B_R ^{E'}
		+ 20 \lambda _0 , \\
\eeqa
which proves the other claim of Lemma \ref{key}.

%


\begin{thebibliography}{77}     



\bibitem{B1} \label{B1}  Bateman, Michael. 
{\it $L^p$ estimates for maximal averages along one-variable vector fields in ${\mathbf R} ^2$, }
Proc. Amer. Math. Soc. $\mathbf{137}$,  (2009), 955-963. 
%



\bibitem{D} \label{D}  Duoandikoetxea, J.
{\it Fourier Analysis,   }
2001, AMS Graduate Studies in Mathematics, vol. 29


\bibitem{K1}  \label{K1}  Katz, N.H.
{\it Maximal operators over arbitrary sets of directions,} 
Duke Math. J.  vol. 97, no. 1 (1999), 67-79.

\bibitem{K2}  \label{K2} Katz, N.H.
{\it  Remarks on maximal operators over arbitrary sets of directions,}
Bull. London Math. Soc.  vol. 31,  1999,  no. 6,  pages 700--710


\bibitem{LL1}  \label{LL1} Lacey, Michael, and Xiaochun Li.
{\it   Maximal Theorems for the Directional Hilbert Transform on the Plane }
Trans. Amer. Math. Soc.  358  (2006), 4099-4117. 

\bibitem{LL2}  \label{LL2} Lacey, Michael, and Xiaochun Li.
{\it   On a Conjecture of EM Stein on the Hilbert Transform on Vector Fields }
Memoirs of the AMS 205  (2010), no. 965. 

\bibitem{LL3} \label{LL3} Lacey, Michael, and Xiaochun Li.
{\it  On a Lipschitz Variant of the Kakeya Maximal Function }
Available at    http://arxiv.org/abs/math/0601213



\end{thebibliography}
\end{document}